\newtheorem{theo}{Theorem}[section]
\newtheorem{lemma}[theo]{Lemma}
\newtheorem{prop}[theo]{Proposition}
\newtheorem{cor}[theo]{Corollary}
\newtheorem{defi}[theo]{Definition}
\newtheorem*{theononumber}{Theorem}
\newtheorem*{conjmatzat}{Matzat's Conjecture}
\theoremstyle{definition}
\newcommand{\f}{\phi}
\newcommand{\G}{\mathcal{G}}
\newcommand{\Gl}{\operatorname{GL}}
\newcommand{\Aut}{\underline{\operatorname{Aut}}}
\newcommand{\Hom}{\operatorname{Hom}}
\newcommand{\id}{\operatorname{id}}
\newcommand{\N}{\mathcal{N}}
\newcommand{\de}{\delta}
\def\Sl{\operatorname{SL}}
\newcommand{\kb}{\overline{k}}
\newcommand{\op}{\operatorname{op}}
\title{Subgroups of free proalgebraic groups and Matzat's conjecture for function fields}
\author{Michael Wibmer\thanks{This work was supported by the NSF grants DMS-1760212, DMS-1760413, DMS-1760448 and the Lise Meitner grant M-2582-N32 of the Austrian Science Fund FWF.}}
\date{\today}
\begin{document}
\maketitle

\begin{abstract}	
	We show that a closed finite index subgroup of a free proalgebraic group  is itself a free proalgebraic group. Our main motivation for this result is an application in differential Galois theory: The absolute differential Galois group of a one-variable function field is a free proalgebraic group.
\let\thefootnote\relax\footnotetext{{\em Mathematics Subject Classification Codes:} 14L15, 14L17, 34M50, 12H05.
	{\em Key words and phrases}:
	Affine group schemes, free proalgebraic groups, differential Galois theory.

	Michael Wibmer, Institute of Analysis and Number Theory, Graz University of Technology, Kopernikusgasse 24, 8010 Graz, Austria, \texttt{wibmer@math.tugraz.at}
}

\end{abstract}

\section*{Introduction}

Understanding the absolute differential Galois group of interesting differential fields is a central problem in differential Galois theory. Matzat's conjecture addresses this question for one-variable function fields.

\begin{conjmatzat}
	Let $k$ be an algebraically closed field of characteristic zero and let $K$ be a one-variable function field over $k$, equipped with a non-trivial $k$-derivation. Then the absolute differential Galois group of $K$ is the free proalgebraic group on a set of cardinality $|K|$.
\end{conjmatzat}
The first special case of Matzat's conjecture was proved in \cite{BachmayrHarbaterHartmannWibmer:FreeDifferentialGaloisGroups}. There it was  shown that the conjecture holds for $K=k(x)$, the rational function field over $k$ equipped with the standard derivation $\frac{d}{dx}$, provided that $k$ is countable and of infinite transcendence degree (over $\mathbb{Q}$). Subsequently, in \cite{BachmayrHarbaterHartmannWibmer:TheDifferentialGaloisGroupOfRationalFunctionField}, Matzat's conjecture was proved for $(k(x), \frac{d}{dx})$ provided that $k$ is of infinite transcendence degree. The main result of our article is the following fairly general case of Matzat's conjecture:

\begin{theononumber}[Theorem \ref{theo: main matzat}]
	Let $k$ be an algebraically closed field of characteristic zero of infinite transcendence degree and let $K$ be a one-variable function field over $k$, equipped with a non-trivial $k$-derivation. Then the absolute differential Galois group of $K$ is the free proalgebraic group on a set of cardinality $|K|$.
\end{theononumber}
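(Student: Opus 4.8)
The plan is to deduce this case of Matzat's conjecture from the already-established rational case $(k(x),\frac{d}{dx})$ together with the main theorem on closed finite-index subgroups of free proalgebraic groups. First I would reduce $(K,\partial)$ to a finite extension of a \emph{standard} rational differential field. Since $\partial$ is a non-trivial $k$-derivation and $k$ is algebraically closed, I can choose $x\in K$ with $\partial x\neq 0$; such an $x$ is automatically transcendental over $k$, so $K/k(x)$ is finite. As $\Omega_{K/k}$ is one-dimensional over $K$, the space of $k$-derivations of $K$ is a one-dimensional $K$-vector space, whence $\partial=(\partial x)\cdot\tfrac{\partial}{\partial x}$, where $\tfrac{\partial}{\partial x}$ is the unique extension to $K$ of the standard derivation of $k(x)$. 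Thus $\tfrac{\partial}{\partial x}=u\partial$ for the unit $u=(\partial x)^{-1}\in K^\times$. Rescaling a derivation by a unit $u$ yields an equivalence of the associated categories of differential modules via $(M,\nabla)\mapsto(M,u\nabla)$ (the twisted Leibniz rule is immediate and the inverse functor uses $u^{-1}$), so $(K,\partial)$ and $(K,\tfrac{\partial}{\partial x})$ have isomorphic absolute differential Galois groups. I may therefore assume $(k(x),\frac{d}{dx})$ is a differential subfield of $K$ with $K/k(x)$ finite.

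Next I would check that the constants are unchanged: for the algebraic differential extension $K/k(x)$, differentiating the minimal polynomial of a constant shows that the constants of $K$ are algebraic over the constants $k$ of $k(x)$, and since $k$ is algebraically closed the constant field of $K$ is again $k$. Hence both differential fields have the same algebraically closed field of constants $k$, and their absolute differential Galois groups are genuine proalgebraic groups over $k$. The core step is then to translate the finite field extension into a finite-index subgroup: because $K/k(x)$ is a finite (separable, as $\operatorname{char}k=0$) extension of differential fields with common constants $k$, the absolute differential Galois group $\gal(K)$ is (isomorphic to) a closed subgroup of $\gal(k(x))$ of index $[K:k(x)]$, this being the differential analogue of the classical correspondence between finite subextensions and finite-index subgroups of the fundamental group.

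Now I would invoke the inputs. By the cited rational case of Matzat's conjecture, valid precisely because $k$ has infinite transcendence degree, $\gal(k(x))$ is the free proalgebraic group on a set of cardinality $|k(x)|=|K|$. Applying the paper's main theorem, the closed finite-index subgroup $\gal(K)$ is itself free proalgebraic. It remains to pin down the rank: since $k$ is algebraically closed of characteristic zero, $|K|$ is infinite, and a Nielsen--Schreier-type index computation for free proalgebraic groups leaves infinite ranks unchanged (a subgroup of index $n$ of a free group of infinite rank $m$ has rank $n(m-1)+1=m$). Therefore $\gal(K)$ is free of rank $|K|$, proving the theorem.

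The step I expect to demand the most care is the passage from the finite differential extension $K/k(x)$ to the closed finite-index subgroup $\gal(K)\le\gal(k(x))$ with the correct index, together with the rank bookkeeping that identifies the resulting free rank as $|K|$; by contrast, the rescaling reduction and the constant-field computation are comparatively routine, and the essential difficulty of establishing freeness is already absorbed by the main theorem.
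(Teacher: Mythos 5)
Your overall architecture coincides with the paper's: rescale the derivation to reduce to $(K,\de_x)$, a finite differential extension of $(k(x),\tfrac{d}{dx})$; realize the absolute differential Galois group of $K$ as a closed finite-index subgroup of that of $k(x)$; then quote the rational case of Matzat's conjecture and the theorem that closed finite-index subgroups of free proalgebraic groups are free (Theorem \ref{theo: proalgebraic NielsenSchreier}), noting that an infinite rank is unchanged. Your rescaling step and the constants computation are sound; for the rescaling the paper argues more elementarily (Lemma \ref{lem: isomorphic absolute differential Galois groups}: the linear closure $\widetilde{K}$ of $(K,\de)$, re-equipped with $a\de$, \emph{is} a linear closure of $(K,a\de)$, and the two automorphism group functors literally coincide), whereas your equivalence of categories of differential modules also works but silently requires the Tannakian identification of the absolute differential Galois group and the fact that fiber functors over algebraically closed constants yield isomorphic groups.

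The genuine gap is in what you yourself single out as the core step: the claim that for the finite extension $K/k(x)$ of differential fields the group $\gal(K)$ is a closed finite-index subgroup of $\gal(k(x))$, justified as ``the differential analogue of the classical correspondence between finite subextensions and finite-index subgroups of the fundamental group.'' There is no such off-the-shelf correspondence to invoke, and the analogy breaks down exactly where the work lies. In the profinite/\'{e}tale setting the statement is immediate because a separable closure of a finite extension of $F$ is a separable closure of $F$; the differential analogue of this transitivity is \emph{false} in general: a Picard--Vessiot extension of a Picard--Vessiot extension of $F$ need not embed into any Picard--Vessiot extension of $F$ (Magid's example, cited in the paper). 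So one must actually prove that the linear closure $\widetilde{K}$ of $(K,\de_x)$ is also a linear closure of $(k(x),\tfrac{d}{dx})$; this is Lemma \ref{lemma: finite de extension}, proved via the characterization of linear closures by $\de$-finite generators (Lemma \ref{lemma:characterize linear closure}): since $[K:k(x)]<\infty$, any finite-dimensional $\de$-stable $K$-subspace of $\widetilde{K}$ is a finite-dimensional $\de$-stable $k(x)$-subspace, so $\de$-finiteness over $K$ implies $\de$-finiteness over $k(x)$. Even granting this, the Galois correspondence only exhibits $\gal(K)=G(\widetilde{K}/K)$ as a \emph{closed} subgroup of $G(\widetilde{K}/k(x))=\gal(k(x))$; finiteness of the index is a separate argument (Lemma \ref{lemma: finite PV implies quotient is finite scheme}), obtained by placing $K$ inside a finite-type Picard--Vessiot subextension and using that the identity component $G^o$ fixes the relative algebraic closure of $k(x)$, so that $G^o\leq \gal(K)$. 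Your stronger assertion that the index equals $[K:k(x)]$ is neither proved in your argument nor needed: only finiteness of the index enters, since for an infinite generating set the Nielsen--Schreier count returns the same cardinality. As it stands, your proposal presupposes precisely the differential-Galois-theoretic content (Lemmas \ref{lemma:characterize linear closure}, \ref{lemma: finite PV implies quotient is finite scheme}, \ref{lemma: finite de extension} and Corollary \ref{cor: absolute Galois group is finite index subgroup}) that the paper has to establish.
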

In other words, Matzat's conjecture holds, provided that $k$ has infinite transcendence degree. In fact, we show that, for a fixed algebraically closed field $k$ of characteristic zero (of arbitrary transcendence degree), Matzat's conjecture for $(k(x), \frac{d}{dx})$ implies the general form of Matzat's conjecture over $k$. Our proof of this result has two main steps: 
\begin{enumerate}
	\item We show that the absolute differential Galois group of a one-variable function field over $k$, equipped with a non-zero $k$-derivation, is isomorphic to a closed finite index subgroup of the absolute differential Galois group of $(k(x), \frac{d}{dx})$.
	\item We show that closed finite index subgroups of free proalgebraic groups are themselves free.
\end{enumerate}
The second point can be seen as an algebraic-geometric analog of the Nielsen-Schreier theorem, stating that subgroups of (abstract) free groups are themselves free. It can also be seen as an analog of the corresponding result for free profinite groups. A closed subgroup of a free profinite group is in general not free. However, an open (=closed of finite index) subgroup of a free profinite group is free (see \cite[Section 17.6]{FriedJarden:FieldArithmetic} or \cite[Section 3.6]{RibesZalesskii:ProfiniteGroups}). Our proof of (ii) follows along the lines of \cite{RibesSteinberg:AWreathProductApproachToClassicalSubgroupTheorems} which provides an algebraic approach to the Nielsen-Schreier theorem based on wreath products.

We conclude the introduction with an outline of the paper. In the first section we show that a closed finite index subgroup of a free proalgebraic group is itself free. In the second section we apply this result to obtain a proof of Matzat's conjecture for function fields over constants of infinite transcendence degree.

\section{Subgroups of free proalgebraic groups}


\subsection{Notation and quotients}

Throughout this article $k$ is a field (our base field) and $\kb$ is an algebraic closure of $k$. All schemes are assumed to be over $k$ unless the contrary is indicated. For brevity and to  emphasize the analogy with the theory of profinite groups, we use the term ``proalgebraic group (over $k$)'' in lieu of ``affine group scheme (over $k$)''. Similarly, an algebraic group (over $k$) is an affine group scheme of finite type over $k$. A closed subgroup of a proalgebraic group is, by definition, a closed subgroup scheme. We will usually identify a proalgebraic group $G$ with its functor of points $R\rightsquigarrow G(R)$ from the category of $k$-algebras to the category of groups.
For an affine scheme $X$, we denote the ring of global sections of $X$ by $k[X]$, i.e., $X\simeq \Hom(k[X],-)$  as functors from the category of $k$-algebras to the category of groups.

A morphism $\f\colon G\to H$ of proalgebraic groups is a \emph{closed embedding} if it induces an isomorphism between $G$ and a closed subgroup of $H$; equivalently, $\f\colon G(R)\to H(R)$ is injective for every $k$-algebra $R$ (\cite[Theorem 15.3]{Waterhouse:IntroductiontoAffineGroupSchemes}).
For a morphism $\f\colon G\to H$ of proalgebraic groups we denote with $\f(G)$ the scheme theoretic image of $\f$. It is the closed subgroup of $H$ defined by the kernel of the dual map $\f^*\colon k[H]\to k[G]$.

Let $H$ be a closed subgroup of a proalgebraic group $G$. We will need to work with the left and right coset spaces $G/H$ and $H\backslash G$ of $H$ in $G$.
So let us make precise what we mean by that: We define 
$G/H$ as the sheaf in the fpqc topology associated to the functor $R\rightsquigarrow G(R)/H(R)$ from the category of $k$-algebras to the category of groups, where $G(R)/H(R)$ is the usual coset space of $H(R)$ in $G(R)$ (as in \cite[Chapter III, \S 3, Section 7]{DemazureGabriel:GroupesAlgebriques}). The functor $H\backslash G$ is defined similarly. We note that if the sheaf $\tilde{G/H}$ in the fppf topology associated to the functor $R\rightsquigarrow G(R)/H(R)$ is a scheme (this, e.g., is the case if $G$ is algebraic (\cite[Chapter III, \S 3, Theorem 5.4]{DemazureGabriel:GroupesAlgebriques})), then $G/H=\tilde{G/H}$.
In general, $G/H$ need not be a scheme and if $G/H$ is a scheme it need not be affine. However, we are mostly interested in the following two situations:
\begin{itemize}
	\item The closed subgroup $H$ is normal in $G$. In this case $G/H$ is an affine scheme and therefore a proalgebraic group (\cite[Chapter III, \S 3, Theorem 7.2]{DemazureGabriel:GroupesAlgebriques}).  
	\item The functor $H\backslash G$ is a finite (and hence affine) $k$-scheme. 
\end{itemize}
Note that $H\backslash G$ comes equipped with a right action of $G$ and a canonical morphism $G\to H\backslash G$, that is $G$-equivariant for the right multiplication of $G$ on $G$.
The inversion $g\mapsto g^{-1}$ induces an isomorphism $G/H\to H\backslash G$. In particular, $G/H$ is a finite $k$-scheme if and only if $H\backslash G$ is a finite $k$-scheme and the $k$-dimensions of $k[G/H]$ and $k[H\backslash G]$ agree. If this is the case, we say that $H$ has \emph{finite index} in $G$, or that $H$ is a finite index subgroup of $G$. The index $[G:H]$ of $H$ in $G$ is defined as the dimension of $k[G/H]$ as a $k$-vector space. 

A morphism $\f\colon G\to H$ of proalgebraic groups is a \emph{quotient map} if it induces an isomorphism between $H$ and $G/\ker(\f)$. This is equivalent to the dual map $\f^*\colon k[H]\to k[G]$ being injective. For brevity, we call a closed normal subgroup $N$ of a proalgebraic group $G$ a \emph{coalgebraic subgroup} of $G$ if $G/N$ is an algebraic group.
For a scheme $X$ over $k$ and a $k$-algebra $R$, we denote with $X_R$ the $R$-scheme obtain from $X$ by base change via $k\to R$.

For a scheme $\Sigma$ over $k$, we define $\Aut(\Sigma)$ as the functor from the category of $k$-algebras to the category of groups given by $\Aut(\Sigma)(R)=\operatorname{Aut}(\Sigma_R)$.
If $\Sigma$ is a finite $k$-scheme, $\Aut(\Sigma)$ is an algebraic group (\cite[Chapter II, \S1, 2.7]{DemazureGabriel:GroupesAlgebriques}).

\begin{lemma} \label{lemma: exists coalgebraic subgroup is finite index subgroup}
	Let $H$ be a closed finite index subgroup of a proalgebraic group $G$. Then there exists a coalgebraic subgroup $N$ of $G$ with $N\leq H$.
\end{lemma}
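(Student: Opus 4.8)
The plan is to mimic the classical construction of the normal core of a finite index subgroup. The key input is that, by hypothesis, the coset space $H\backslash G$ is a finite $k$-scheme, so by the fact cited just above, $\Aut(H\backslash G)$ is an algebraic group. I would write $\pi\colon G\to H\backslash G$ for the canonical $G$-equivariant morphism and let $\bar e=\pi(e)\in(H\backslash G)(k)$ denote the base point, and then build a homomorphism $\rho\colon G\to\Aut(H\backslash G)$ out of the right action of $G$ on $H\backslash G$. Concretely, for a $k$-algebra $R$ and $g\in G(R)$ the map $x\mapsto x\cdot g^{-1}$ is an automorphism of $(H\backslash G)_R$, and $g\mapsto(x\mapsto x\cdot g^{-1})$ is a morphism of group schemes (the inverse is inserted so that a right action yields a homomorphism rather than an anti-homomorphism). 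I would then set $N=\ker(\rho)$, a closed normal subgroup of $G$, and its kernel is exactly the pointwise stabilizer of $H\backslash G$.

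Two things then remain, matching the two clauses in the definition of a coalgebraic subgroup. First, $G/N$ is algebraic: by the factorization of a morphism of proalgebraic groups into a quotient map followed by a closed embedding, $\rho$ factors as $G\to G/N\hookrightarrow\Aut(H\backslash G)$, identifying $G/N$ with the scheme-theoretic image $\rho(G)$. As a closed subgroup of the algebraic group $\Aut(H\backslash G)$ this image is of finite type, so $G/N$ is algebraic. Second, $N\leq H$: since the elements of $N(R)$ act trivially on $(H\backslash G)_R$, they in particular fix $\bar e$; using $G$-equivariance of $\pi$ one has $\bar e\cdot g=\pi(g)$, so fixing $\bar e$ forces $\pi(g)=\pi(e)$, i.e. $g$ lies in the fiber $\pi^{-1}(\bar e)=\operatorname{Stab}_G(\bar e)$.

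It therefore suffices to identify this fiber with $H$, and this is the main point requiring care. At the level of the presheaf $R\rightsquigarrow H(R)\backslash G(R)$ it is immediate that $Hg=H$ exactly when $g\in H(R)$; the subtlety is that $H\backslash G$ is the associated fpqc sheaf, so $\pi(g)=\pi(e)$ only guarantees that $g$ lands in $H(R')$ after some faithfully flat base change $R\to R'$. I would close this gap by fpqc descent for the closed embedding $H\hookrightarrow G$: membership in the closed subscheme $H$ is an fpqc\=/local condition on the source, so $g\in H(R')$ for faithfully flat $R\to R'$ already forces $g\in H(R)$. Hence $N(R)\subseteq H(R)$ for every $R$, giving $N\leq H$, and $N$ is the desired coalgebraic subgroup. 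The only genuine obstacles are this sheafification subtlety and checking that the factorization of $\rho$ applies; everything else is a transcription of the finite-group core argument.
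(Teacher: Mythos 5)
Your proposal is correct and is essentially the paper's proof: both take $N$ to be the kernel of the homomorphism from $G$ to the (algebraic) automorphism group of the finite coset scheme, observe that $N\leq H$, and get algebraicity of $G/N$ from the fact that it embeds as a closed subgroup of that algebraic group. The only differences are cosmetic — the paper uses the left action on $\Sigma=G/H$ where you use the right action on $H\backslash G$ with an inverse inserted — plus the fact that you spell out, via fpqc descent, why the fiber over the base point is exactly $H$, a step the paper asserts without comment (it is the content of Lemma \ref{lemma: quotients}(i) specialized to $g_2=1$).
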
	
\begin{proof}
	Set $\Sigma=G/H$ and let $N$ be the kernel of the morphism $G\to \Aut(\Sigma)$ of proalgebraic groups corresponding to the (left) $G$-action on $\Sigma$. Then $N\leq H$ and because $G/N$ embeds into the algebraic group $\Aut(\Sigma)$, also $G/N$ is algebraic.
\end{proof}

The following lemma summarizes what we will need to know about quotients.
\begin{lemma} \label{lemma: quotients}
	Let $H$ be a closed finite index subgroup of a proalgebraic group $G$.

	\begin{enumerate}
		\item For a $k$-algebra $R$ and $g_1,g_2\in G(R)$ we have $H(R)g_1=H(R)g_2$ if and only if $g_1$ and $g_2$ have the same image under the canonical map $G\to H\backslash G$.  
		\item If $k$ is algebraically closed, the map $G(k)\to (H\backslash G)(k)$ is surjective.
		\item If $G$ is reduced, then also $H\backslash G$ is reduced.
	\end{enumerate}

\end{lemma}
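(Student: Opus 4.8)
The plan is to reduce all three parts to the case of algebraic groups via Lemma~\ref{lemma: exists coalgebraic subgroup is finite index subgroup}. Fix a coalgebraic subgroup $N\leq H$, so that $\mathcal{G}:=G/N$ is algebraic, $\overline{H}:=H/N$ is a closed subgroup of $\mathcal{G}$, and the canonical maps identify $H\backslash G$ with $\overline{H}\backslash\mathcal{G}$ through the factorization $G\to\mathcal{G}\to\overline{H}\backslash\mathcal{G}$. A quotient map of affine group schemes is faithfully flat, and the quotient $\mathcal{G}\to\overline{H}\backslash\mathcal{G}$ of an algebraic group by a closed subgroup is faithfully flat as well, so the composite $\pi\colon G\to H\backslash G$ is faithfully flat; in particular it is an $H$-torsor, and its fiber over the base point $\overline{e}$ is $H$, since the preimage of $\overline{H}$ under $G\to\mathcal{G}$ is $H$ (using $N\leq H$). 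I will use these facts throughout.

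For (i), the easy implication is that $G(R)\to(H\backslash G)(R)$ factors through the presheaf quotient $H(R)\backslash G(R)$, so that $g_1,g_2$ in a common coset have the same image. For the converse I would compute the fibered product sectionwise: as $G$ is a sheaf, $(G\times_{H\backslash G}G)(R)=\{(g_1,g_2)\in G(R)^2: \pi(g_1)=\pi(g_2)\}$. Using $G$-equivariance of $\pi$ for the right action one gets $\pi(g_1)=\pi(g_2)\iff \pi(g_1g_2^{-1})=\overline{e}\iff g_1g_2^{-1}\in\pi^{-1}(\overline{e})(R)=H(R)$, which is exactly $H(R)g_1=H(R)g_2$; equivalently this is the torsor identity $H\times G\xrightarrow{\ \sim\ }G\times_{H\backslash G}G$, $(h,g)\mapsto(hg,g)$, read on $R$-points. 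Part (iii) is then immediate: faithful flatness of $\pi$ forces the comorphism $k[H\backslash G]\to k[G]$ to be injective, so $k[H\backslash G]$ is a subring of $k[G]$, and a subring of a reduced ring is reduced.

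The hard part is (ii). Since $\pi$ is faithfully flat, for $x\in(H\backslash G)(k)$ the fiber $P:=\pi^{-1}(x)$ is a nonempty affine $k$-scheme, and a point of $P(k)$ is precisely a point of $G(k)$ mapping to $x$. The scheme $P$ is a torsor under $H$, and I would produce a rational point using the pro-algebraic structure together with the Nullstellensatz. Writing $H=\varprojlim_i H_i$ as the cofiltered limit of its algebraic quotients, we have $P=\varprojlim_i P_i$, where each $P_i$ is a torsor under the algebraic group $H_i$, hence a nonempty finite-type $k$-scheme with $P_i(k)\neq\varnothing$ by the Nullstellensatz, and the transition maps $P_j(k)\to P_i(k)$ are surjective. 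The point I seek is a compatible thread $(p_i)\in\varprojlim_i P_i(k)$.

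The main obstacle is exactly the nonemptiness of this inverse limit: for a general directed system of nonempty sets with surjective transition maps the limit can be empty, so surjectivity alone is not enough. Here I would run a Zorn's lemma argument adapted to the scheme structure. Consider partial compatible families $(J,(p_i)_{i\in J})$, with $J$ a subset of the index set and the $p_i\in P_i(k)$ compatible, ordered by extension, and attach to each the closed subscheme of $P$ cut out by requiring the coordinates at the levels in $J$ to equal the chosen $p_i$. A chain has an upper bound because the associated closed subscheme is the vanishing locus of the union of an increasing chain of \emph{proper} ideals, which is again proper and hence nonempty. For a maximal family the index set must be cofinal: otherwise, projecting the nonempty associated closed subscheme to the next algebraic level $P_{i_0}$ yields a coset of an algebraic subgroup of $H_{i_0}$, which carries a rational point by the Nullstellensatz that lifts back into the closed subscheme, contradicting maximality. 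The resulting thread is the desired $k$-point, and this gives the surjectivity of $G(k)\to(H\backslash G)(k)$.
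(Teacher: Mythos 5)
Your parts (i) and (iii) are correct and, apart from packaging, run on the same fuel as the paper's proof: reduction to the algebraic quotient $\mathcal{G}=G/N$ via Lemma~\ref{lemma: exists coalgebraic subgroup is finite index subgroup}, the identification $H\backslash G\simeq (H/N)\backslash(G/N)$, and faithful flatness. (For (i) the paper simply cites Demazure--Gabriel's argument with the equivalence relation defined by $H$, while you derive it from the torsor identity at the algebraic level together with the fact that the preimage of $H/N$ in $G$ is $H$; that is fine.)

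Part (ii), however, has a genuine gap, located exactly at the step you yourself identified as the crux. In the Zorn argument, to contradict maximality you must produce a $k$-point $p_{i_0}\in P_{i_0}(k)$ over which the fiber of $Z:=Z_{F^*}\to P_{i_0}$ is \emph{nonempty}. The Nullstellensatz only gives you a $k$-point of the scheme-theoretic image of $Z$ in $P_{i_0}$, i.e.\ of the closure of the image, and such a point need not lift back into $Z$: already for finite-type $Z$ this fails (the hyperbola $\spec k[x,y]/(xy-1)\subseteq \mathbb{A}^2$ has scheme-theoretic image $\mathbb{A}^1$ under the first projection, but empty fiber over $0$), and since your $Z$ is \emph{not} of finite type over $k$, Chevalley's theorem is unavailable and the set-theoretic image need not be constructible; it can even consist entirely of non-closed points (a closed subscheme of a pro-algebraic scheme can be isomorphic to $\spec$ of a transcendental extension of $k$, whose image in a finite-type scheme is a single generic point). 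So the phrase ``lifts back into the closed subscheme'' is precisely what is not proved.

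Your parenthetical claim that the image is ``a coset of an algebraic subgroup of $H_{i_0}$'' is the right idea for closing this gap, but it is asserted rather than proved, and even granted it does not suffice. To make it work one must (a) show that $Z$ is a torsor under $M=\bigcap_{i\in J^*}\ker(H\to H_i)$ (any two $R$-points of $Z$ differ by a unique element of $H(R)$ lying in every $\ker(H\to H_i)(R)$), (b) deduce by faithfully flat descent that the scheme-theoretic image of $Z$ in $P_{i_0}$ is an orbit of the algebraic group $\operatorname{Im}(M\to H_{i_0})$, and (c) --- the essential point --- show that $Z$ actually maps \emph{onto} this orbit, which amounts to surjectivity of the quotient-type homomorphism from the pro-algebraic group $M$ onto $\operatorname{Im}(M\to H_{i_0})$. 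Step (c) is a theorem of the same depth as the statement you are proving: it rests on faithful flatness of quotient maps of affine group schemes that are not of finite type (Demazure--Gabriel, Chapter III, \S 3; equivalently, faithful flatness of commutative Hopf algebras over Hopf subalgebras), and cannot be extracted from the Nullstellensatz. This is exactly the nontrivial input that the paper's proof of (ii) obtains by citing \cite[Chapter III, \S 3, Cor. 7.6]{DemazureGabriel:GroupesAlgebriques} for the surjectivity of $G(k)\to (G/N)(k)$, after which only the algebraic case remains. So either cite that result, as the paper does (your fiber-and-limit setup then becomes unnecessary), or supply (a)--(c) in full; as written, the argument attempts to re-derive it with tools that are too weak.
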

\begin{proof}
	Point (i) follows as in \cite[Chapter III, \S 1, Example 2.4]{DemazureGabriel:GroupesAlgebriques} by using the equivalence relation on $G$ defined by $H$.
	
	For (ii), let $N$ be a coalgebraic subgroup of $G$ such that $N\leq H$ (Lemma \ref{lemma: exists coalgebraic subgroup is finite index subgroup}). Then $H/N$ can be identified with a closed subgroup of $G/N$ and we have a commutative diagram	
	$$
	\xymatrix{
	G \ar[r] \ar[d] & G/N \ar[d] \\
	H\backslash G \ar[r] & (H/N)\backslash(G/N)	
	}
$$
in which the lower horizontal arrow is an isomorphism. It thus suffices to show that the upper horizontal arrow and the right vertical arrow are surjective on $k$-points. For $G\to G/N$ this is \cite[Chapter III, \S 3, Cor. 7.6]{DemazureGabriel:GroupesAlgebriques}. For the right vertical arrow this is the claim of (ii) under the additional assumption that $G$ is algebraic. But if $G$ is algebraic, then $G\to H\backslash G$ is a faithfully flat morphism of affine schemes of finite type over $k$ (\cite[Chapter III, \S 3, Prop. 2.5]{DemazureGabriel:GroupesAlgebriques}) and therefore surjective on the $k$-points.

For (iii), we can reduce to the case that $G$ is algebraic as in (ii). (Note that $G/N$ is reduced if $G$ is reduced, because $k[G/N]\to k[G]$ is injective.) Then $G\to H\backslash G$ is faithfully flat and so the dual map $k[H\backslash G]\to k[G]$ is injective. So if $k[G]$ is reduced, also $k[H\backslash G]$ is reduced.
\end{proof}

\subsection{Wreath products for proalgebraic groups}

Our proof that closed finite index subgroups of free proalgebraic groups are themselves free, follows along the lines of \cite{RibesSteinberg:AWreathProductApproachToClassicalSubgroupTheorems}, which relies on properties of wreath products.
In this section we introduce wreath products for proalgebraic groups and establish some of their basic properties. The author was not able to locate any reference that deals with wreath products of (pro)algebraic groups. Our definitions and results are mostly straight forward adaptions from the classical case of groups. However, a question of representability intervenes.

Let us first recall the construction of wreath products in the category of groups. Let $A$ be a group and let $G$ be a group acting on a set $\Sigma$ from the right. The wreath product $A\wr G$ of $A$ by $G$ is the semidirect product
$$ A\wr G=A^\Sigma\rtimes G,$$
where $A^\Sigma$ is the set of all maps from $\Sigma$ to $A$ considered as a group under pointwise multiplication and $G$ acts on $A^\Sigma$ (from the left) via $g(f)(s)=f(sg)$ for $g\in G,\ f\in A^\Sigma$ and $s\in \Sigma$.

\medskip

Now let $A$ and $G$ be proalgebraic groups over $k$ and let $\Sigma$ be a $k$-scheme equipped with a right action of $G$. Let $A^\Sigma$ denote the functor from the category of $k$-algebras to the category of groups given by $A^\Sigma(R)=\Hom(\Sigma_R,A_R)$ for any $k$-algebra $R$. The group structure on the set $\Hom(\Sigma_R,A_R)$ of all $R$-scheme morphisms from $\Sigma_R$ to $A_R$ is given by $f_1f_2\colon \Sigma_R\xrightarrow{(f_1,f_2)} A_R\times_R A_R\to A_R$ for $f_1,f_2\in \Hom(\Sigma_R,A_R)$, where the second map is the multiplication in $A_R$.
In general, there is no reason why $A^\Sigma$ should be a scheme (i.e., representable), however: 

\begin{lemma} \label{lemma: AhochSigma}
	If $\Sigma$ is a finite $k$-scheme, then $A^\Sigma$ is a proalgebraic group. Moreover, the diagonal morphism $d\colon A\to A^\Sigma$ is a closed embedding of proalgebraic groups.
\end{lemma}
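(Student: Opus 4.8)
The plan is to unwind the functor $A^\Sigma$ into purely algebraic terms and recognize it as the Weil restriction of $A$ along the finite morphism $\Sigma\to\spec k$. Writing $B=k[\Sigma]$, which is a finite-dimensional $k$-algebra, an $R$-morphism $\Sigma_R\to A_R$ is the same as an $R$-algebra homomorphism $k[A]\otimes_k R\to B\otimes_k R$, and by the universal property of base change this is in turn the same as a $k$-algebra homomorphism $k[A]\to B\otimes_k R$. Thus
\[
A^\Sigma(R)=\Hom_{k\text{-alg}}(k[A],B\otimes_k R),
\]
naturally in $R$, the pointwise group law being induced by the comultiplication of $A$. It therefore suffices to co-represent this set-valued functor: the group structure then comes for free, since a representable functor to groups is automatically a proalgebraic group.

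First I would treat the case that $A$ is algebraic. Fix a presentation $k[A]=k[x_1,\dots,x_m]/I$ with $I$ finitely generated (possible, as $k[x_1,\dots,x_m]$ is Noetherian) and a $k$-basis $e_1,\dots,e_n$ of $B$ with structure constants $e_ie_j=\sum_l\gamma_{ij}^l e_l$ in $k$. A homomorphism $k[A]\to B\otimes_k R$ is determined by the images $\sum_i\psi_{ji}e_i$ of the generators $x_j$, that is, by a tuple $(\psi_{ji})\in R^{mn}$, subject only to the conditions that each generator of $I$ vanish. Expanding these relations in the basis $e_1,\dots,e_n$ and using the structure constants turns them into finitely many polynomial equations in the $\psi_{ji}$ with coefficients in $k$. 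Hence $A^\Sigma$ is co-represented by the finitely generated $k$-algebra $C=k[y_{ji}]/J$, where $J$ is generated by these polynomials; in particular $A^\Sigma$ is an affine scheme of finite type, i.e.\ an algebraic group.

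For general $A$ I would pass to the limit. Writing $A=\varprojlim_i A_i$ as the inverse limit of its algebraic quotients $A_i=A/N_i$, we have $k[A]=\varinjlim_i k[A_i]$, and the description above gives
\[
A^\Sigma(R)=\Hom_{k\text{-alg}}(\varinjlim_i k[A_i],B\otimes_k R)=\varprojlim_i\Hom_{k\text{-alg}}(k[A_i],B\otimes_k R)=\varprojlim_i A_i^\Sigma(R).
\]
Thus $A^\Sigma=\varprojlim_i A_i^\Sigma$ is a cofiltered inverse limit of algebraic groups and hence a proalgebraic group. The step I expect to be the genuine content is the representability in the algebraic case, where one must keep careful track of the fact that the defining equations are genuinely polynomial over $k$ (this is exactly where finiteness of $\Sigma$ enters); the passage to the limit is then formal.

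Finally, for the diagonal $d\colon A\to A^\Sigma$, which sends $a\in A(R)$ to the constant morphism $\Sigma_R\to A_R$ with value $a$, I would check injectivity on $R$-points and invoke the criterion that a morphism of proalgebraic groups which is injective on $R$-points for every $k$-algebra $R$ is a closed embedding (\cite[Theorem 15.3]{Waterhouse:IntroductiontoAffineGroupSchemes}). Under the identification above, $d(a)$ corresponds to the composite $k[A]\xrightarrow{a^*}R\xrightarrow{\eta}B\otimes_k R$, where $a^*$ is the homomorphism defining $a$ and $\eta(r)=1_B\otimes r$ is the unit of the $R$-algebra $B\otimes_k R$. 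Since $\Sigma$ is nonempty we have $B\neq 0$, so $1_B$ is a nonzero $k$-linear combination of the $e_i$ with at least one nonzero (hence invertible) coefficient; this forces $\eta$ to be injective, so $a^*$ is recovered from $d(a)$ and $d$ is injective on every $R$. This yields the closed embedding and completes the argument.
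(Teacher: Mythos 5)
Your proof is correct, and it follows the same conceptual route as the paper: both identify $A^\Sigma$ with the Weil restriction of $A$ along the finite-dimensional algebra $k[\Sigma]$, via the natural bijection $A^\Sigma(R)\simeq \Hom_k(k[A],k[\Sigma]\otimes_k R)$, and both deduce that the diagonal is a closed embedding from injectivity on $R$-points together with the criterion of \cite[Theorem 15.3]{Waterhouse:IntroductiontoAffineGroupSchemes}. The difference lies in how representability is established: the paper simply cites \cite[Chapter I, \S1, Prop. 6.6]{DemazureGabriel:GroupesAlgebriques} for the fact that Weil restriction along a finite-dimensional algebra preserves affineness, whereas you reprove that fact from scratch --- choosing a finite presentation of $k[A]$ and a basis of $k[\Sigma]$ with structure constants to write down the representing algebra in the algebraic case, then passing to the cofiltered limit over the algebraic quotients of $A$ (using $k[A]=\varinjlim k[A_i]$, so that $A^\Sigma=\varprojlim A_i^\Sigma$) for general $A$. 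This buys self-containedness, since your two-step argument is essentially the standard proof of the cited result, at the cost of length; the citation handles algebraic and proalgebraic $A$ uniformly. One point where you are more careful than the paper: the paper asserts injectivity of $A(R)\to A^\Sigma(R)$ without comment, while you note that it requires $\Sigma\neq\emptyset$ (equivalently $k[\Sigma]\neq 0$), which is indeed a tacit hypothesis --- for $\Sigma=\emptyset$ the group $A^\Sigma$ is trivial and the second claim of the lemma would fail for nontrivial $A$; in the intended application $\Sigma=H\backslash\Gamma$ contains $\overline{1}$, so this is harmless.
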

\begin{proof}
	For a $k$-algebra $R$, we have 
	\begin{align*}
	A^\Sigma(R)&=\Hom(\Sigma_R,A_R)=\Hom_R(R\otimes_kk[A], R\otimes_kk[\Sigma])=\Hom_k(k[A],R\otimes_k k[\Sigma])=\\ &=\Hom_{k[\Sigma]}(k[A]\otimes_k k[\Sigma],R\otimes_k k[\Sigma])=A_{k[\Sigma]}(R\otimes_k k[\Sigma])=(\Pi_{k[\Sigma]/k}A_{k[\Sigma]})(R),
	\end{align*}
	where $\Pi_{k[\Sigma]/k}X$ denotes the Weil restriction $R\rightsquigarrow X(R\otimes_k k[\Sigma])$ of an affine $k[\Sigma]$-scheme $X$ to $k$. The Weil restriction $\Pi_{k[\Sigma]/k}X$ is an affine $k$-scheme if $k[\Sigma]$ is a finite dimensional $k$-vector space (\cite[Chapter I, \S1, Prop. 6.6]{DemazureGabriel:GroupesAlgebriques}). Therefore $A^\Sigma$ is an affine scheme.

	The diagonal morphism $d\colon A\to A^\Sigma$ is defined as follows: For a $k$-algebra $R$ and $a\in A(R)$ the morphism $d(a)\colon \Sigma_R\to A_R$ is given by $d(a)(s)=a$ for $s\in \Sigma(R')$ and any $R$-algebra $R'$. For every $k$-algebra $R$, the map $A(R)\to (A^\Sigma)(R),\ a\mapsto d(a)$ is an injective group homomorphism. Thus $d$ is a closed embedding.
\end{proof}

Henceforth we assume that $\Sigma$ is a finite $k$-scheme.\footnote{This is sufficient for our purposes, since in the application to free proalgebraic groups (Section \ref{subsec: Finite index subgroups of free proalgebraic groups are free}), the role of $\Sigma$ will be played by $H\backslash \Gamma$, where $H$ is a closed finite index subgroup of a free proalgebraic group $\Gamma$.}
The proalgebraic group $G$ acts on the proalgebraic group $A^\Sigma$ from the left by $g(f)\colon \Sigma_R\xrightarrow{g}\Sigma_R\xrightarrow{f}A_R$, i.e., for a $k$-algebra $R$, $g\in G(R)$ and $f\in A^\Sigma(R)$ we have $g(f)(s)=f(sg)$, where $s\in \Sigma(R')$ and $R'$ is an $R$-algebra. We can therefore form the semidirect product
$$ A\wr G=A^\Sigma\rtimes G,$$
henceforth called the \emph{wreath product of $A$ by $G$} (with respect to the $G$-scheme $\Sigma$).

For a morphism $\f\colon A\to B$ of proalgebraic groups, we define $\f^\Sigma\colon A^\Sigma\to B^\Sigma$ by $\f^\Sigma(f)(s)=\f(f(s))$ for $f\in A^\Sigma(R)$ and $s\in\Sigma(R')$ for any $k$-algebra $R$ and any $R$-algebra $R'$. Then $\f^\Sigma$ is a $G$-equivariant morphism of proalgebraic groups. Moreover, if $\f$ is a closed embedding, so is $\f^\Sigma$. It follows that $\f\colon A\to B$ induces a morphism
$$\f\wr G \colon A\wr G\to B\wr G$$
by $(\f\wr G)(f,g)=(\f^\Sigma(f),g)$ for $f\in (A^\Sigma)(R)$, $g\in G(R)$ and $R$ a $k$-algebra. In other words, the wreath product is functorial in the first component. Moreover, if $A\leq B$ and $G\leq H$ are closed subgroups, then $A\wr G$ is a closed subgroup of $B\wr H$. 


\subsection{The embedding theorem}
\label{subsec: The embedding theorem}

Let $H$ be a closed subgroup of a proalgebraic group $G$. Assume that $\Sigma=H\backslash G$ is a finite $k$-scheme and that the canonical map $\pi\colon G\to \Sigma$ has a section $T\colon \Sigma\to G$, i.e., $T$ is a morphism of $k$-schemes such that $\pi T=\id_\Sigma$. Note that if $G$ and $H$ were abstract groups, then such a $T$ would correspond to a right transversal of $H$ in $G$, i.e., a complete system of representatives of the right cosets of $H$ in $G$.
Let $\rho\colon G\to \Aut(\Sigma)^{\op}$ denote the morphism of proalgebraic groups corresponding to the natural right action of $G$ on $\Sigma$. Here $\Aut(\Sigma)^{\op}$ denotes the opposite group of $\Aut(\Sigma)$.
(Recall that by \cite[Chapter II, \S1, 2.7]{DemazureGabriel:GroupesAlgebriques} $\Aut(\Sigma)$ is an algebraic group). The main goal of this section is to construct a closed embedding of $G$ into $H\wr \rho(G)=H^\Sigma\rtimes \rho(G)$.

Let $d\colon G\to G^\Sigma$ denote the diagonal embedding (as in Lemma \ref{lemma: AhochSigma}). Since $G$ acts trivially on $d(G)\leq G^\Sigma$, we have a closed embedding $G\xrightarrow{d\times \rho}G\wr\rho(G)$ of proalgebraic groups. Note that $T\in (G^\Sigma)(k)\leq (G\wr \rho(G))(k)$ and so conjugation with $T$ defines an automorphism $i_T\colon G\wr \rho(G)\to G\wr\rho(G)$. Let 
$$\f\colon G\xrightarrow{d\times\rho} G\wr\rho(G)\xrightarrow{i_T} G\wr\rho(G)$$
be the composition of these two maps. So for a $k$-algebra $R$ and $g\in G(R)$ we have  $$\f(g)=(T,1) (d(g),\rho(g))(T^{-1},1)=(Td(g)\rho(g)(T^{-1}),\rho(g)).$$
Moreover, for an $R$-algebra $R'$ and $s\in \Sigma(R')$, we have $$(Td(g)\rho(g)(T^{-1}))(s)=T(s)gT(sg)^{-1}.$$ Since $\pi(T(s)g)=\pi(T(s))g=sg=\pi(T(sg))$, it follows from Lemma \ref{lemma: quotients} that $T(s)gT(sg)^{-1}\in H(R')$. Therefore, $\f(G)\leq H\wr \rho(G)\leq G\wr\rho(G)$. In summary, we have proved the following embedding theorem:
\begin{theo} \label{theo: embedding theorem}
	Let $H$ be a closed finite index subgroup of a proalgebraic group $G$. Let $T\colon \Sigma\to G$ be a section of the canonical map $G\to\Sigma=H\backslash G$. Then there exists a closed embedding of proalgebraic groups
	$\f\colon G\to H\wr \rho(G)$, given by $\f(g)=(f_g,\rho(g))$, where, for any $k$-algebra $R$ and $g\in G(R)$ the morphism $f_g\colon \Sigma_R\to H_R$, is given by $f_g(s)=T(s)gT(sg)^{-1}$ for $s\in \Sigma(R')$ and $R'$ an $R$-algebra.  
	
\end{theo}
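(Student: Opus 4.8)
The plan is to verify that the explicit map $\f$ described in the statement is a well-defined morphism of proalgebraic groups and that it is a closed embedding, i.e.\ injective on $R$-points for every $k$-algebra $R$. The construction is already laid out in the discussion preceding the theorem: one forms the composite $G\xrightarrow{d\times\rho}G\wr\rho(G)\xrightarrow{i_T}G\wr\rho(G)$, where $d$ is the diagonal embedding of Lemma~\ref{lemma: AhochSigma}, $\rho\colon G\to\rho(G)\leq\Aut(\Sigma)^{\op}$ records the right action on the finite scheme $\Sigma=H\backslash G$, and $i_T$ is conjugation by the $k$-point $T\in(G^\Sigma)(k)$ coming from the section. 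Since $G$ acts trivially on the diagonal $d(G)$, the map $d\times\rho$ lands in $G\wr\rho(G)$ and is a closed embedding (the $\rho$-component composed with $d$ recovers $g$, so injectivity is clear); composing with the automorphism $i_T$ preserves closed embeddings.

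The first step is therefore to unwind the formula for $\f$ from the semidirect product multiplication, obtaining $\f(g)=(Td(g)\rho(g)(T^{-1}),\rho(g))$, and to compute the first component pointwise: for $s\in\Sigma(R')$ one gets $f_g(s)=T(s)\,g\,T(sg)^{-1}$. The key step is then to check that this first component actually takes values in $H$ rather than merely in $G$, so that $\f(G)\leq H\wr\rho(G)$. This is where Lemma~\ref{lemma: quotients} enters: applying the canonical map $\pi\colon G\to\Sigma$ and using $G$-equivariance together with $\pi T=\id_\Sigma$ gives $\pi(T(s)g)=\pi(T(s))g=sg=\pi(T(sg))$, so $T(s)g$ and $T(sg)$ have the same image in $\Sigma=H\backslash G$; by Lemma~\ref{lemma: quotients}(i) this is exactly the condition $T(s)g\,T(sg)^{-1}\in H(R')$. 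Hence $f_g\colon\Sigma_R\to H_R$, as claimed.

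It remains to confirm that $\f$ is a closed embedding into $H\wr\rho(G)$. I would read off injectivity directly: if $\f(g)=\f(g')$ then in particular the $\rho$-components agree and the first components agree, and since $i_T$ is an automorphism and $d\times\rho$ is injective on $R$-points, $g=g'$. By the criterion recalled in the excerpt (injectivity on all $R$-points is equivalent to being a closed embedding), this finishes the proof. The main obstacle, and really the only non-formal point, is the membership $f_g(s)\in H(R')$; everything else is bookkeeping with the semidirect-product structure and the functoriality already established. One should also note that the hypothesis that $\Sigma$ is finite is used implicitly throughout, via Lemma~\ref{lemma: AhochSigma} guaranteeing that $A^\Sigma$ is representable and via $\Aut(\Sigma)$ being algebraic, so the ambient wreath product is a genuine proalgebraic group.
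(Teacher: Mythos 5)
Your proposal is correct and follows essentially the same route as the paper: the paper's own proof is exactly the composite $\f=i_T\circ(d\times\rho)$, the pointwise computation $f_g(s)=T(s)gT(sg)^{-1}$, and the application of Lemma~\ref{lemma: quotients}(i) (via $\pi(T(s)g)=sg=\pi(T(sg))$) to conclude $f_g(s)\in H(R')$, hence $\f(G)\leq H\wr\rho(G)$. Your closing observation that injectivity on all $R$-points suffices for a closed embedding matches the criterion the paper invokes, so there is nothing to add.
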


As above, let $H$ be a closed finite index subgroup of a proalgebraic group $G$, $\Sigma=H\backslash G$ and $\rho\colon G\to \Aut(\Sigma)^{\op}$ the morphism of proalgebraic groups corresponding to the right action of $G$ on $\Sigma$. For any proalgebraic group $A$, we can consider the wreath product $A\wr \rho(H)=A^\Sigma\rtimes \rho(H)$. We define a morphism
$$\pi_A\colon A\wr \rho(H)\to A$$
by $\pi_A(f,g)=f(\overline{1})$ for $f\in (A^\Sigma)(R)=\Hom(\Sigma_R,A_R)$ and $g\in \rho(H)(R)$ where $R$ is a $k$-algebra and $\overline{1}\in \Sigma(R)$ is the trivial class, i.e., the image of $1\in G(R)$ under the canonical map $G(R)\to \Sigma(R)$. Note that for (abstract) groups $\G,\N, \N'$ and a morphism $\psi\colon \N\to \N'$ of groups, the map $\N\rtimes \G\to \N',\ (n,g)\mapsto \psi(n)$ is a morphism of groups, if and only if $\psi(g(n))=\psi(n)$ for all $g\in \G$ and $n\in \N$. For a $k$-algebra $R$, $h\in H(R)$ and $f\in \Hom(\Sigma_R,A_R)$ we have $h(f)(\overline{1})=f(\overline{1}h)=f(\overline{1})$. Therefore, $\pi_A$ is indeed a morphism of proalgebraic groups. In the following lemma, we consider the special case $A=H$.

\begin{lemma} \label{lemma: gives id}
Assume that in the setting of Theorem~\ref{theo: embedding theorem} the section $T\colon \Sigma\to G$ is such that $T(\overline{1})=1$. Then the restriction $\f'\colon H\to H\wr\rho(H)$ of the morphism $\f\colon G\to H\wr \rho(G)$ is such that the composition $H\xrightarrow{\f'}H\wr\rho(H)\xrightarrow{\pi_H}H$ is the identity.
\end{lemma}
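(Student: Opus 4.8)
The plan is to compute the composite $\pi_H \circ \f'$ directly on $R$-points for an arbitrary $k$-algebra $R$ and verify that it is the identity map of $H$. First I would record that the restriction $\f'$ is well-defined: for $h \in H(R)$ the embedding theorem gives $\f(h) = (f_h, \rho(h))$ with $f_h \in H^\Sigma(R)$, and $\rho(h) \in \rho(H)(R)$ since $\rho|_H$ factors through the scheme-theoretic image $\rho(H)$; hence $\f(h)$ indeed lies in $(H\wr\rho(H))(R)$.

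Next, by the definition of $\pi_H$, the composite sends $h$ to $\pi_H(f_h,\rho(h)) = f_h(\overline{1})$, where $\overline{1} \in \Sigma(R)$ is the image of $1 \in G(R)$ under the canonical map. Applying the formula from Theorem \ref{theo: embedding theorem} with $s = \overline{1}$ yields $f_h(\overline{1}) = T(\overline{1})\,h\,T(\overline{1}h)^{-1}$, so everything reduces to identifying the two evaluations of the transversal $T$.

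The key point I would isolate is that $H$ fixes the trivial class. Since the canonical map $\pi\colon G\to \Sigma=H\backslash G$ is $G$-equivariant for right multiplication, we have $\overline{1}h = \pi(1\cdot h)=\pi(h)$, and Lemma \ref{lemma: quotients}(i) gives $\pi(h) = \pi(1) = \overline{1}$ because $h \in H(R)$ means $H(R)h = H(R)$. Hence $T(\overline{1}h) = T(\overline{1})$, and feeding in the hypothesis $T(\overline{1}) = 1$ collapses the expression to $f_h(\overline{1}) = 1\cdot h\cdot 1 = h$. This shows $\pi_H \circ \f' = \id$ on $R$-points for every $R$, hence $\pi_H\circ\f'=\id_H$.

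I do not expect any genuine obstacle here: the argument is entirely formal once the ingredients are assembled. The only step needing slight care is the identification $\overline{1}h = \overline{1}$, which must be read off correctly from the $G$-equivariance of $\pi$ together with the coset-equality criterion of Lemma \ref{lemma: quotients}(i); the normalization $T(\overline{1}) = 1$ is exactly what makes the two transversal terms cancel.
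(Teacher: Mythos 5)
Your proposal is correct and follows essentially the same route as the paper: both reduce to the one-line computation $\pi_H(\f'(h))=f_h(\overline{1})=T(\overline{1})hT(\overline{1}h)^{-1}=T(\overline{1})hT(\overline{1})^{-1}=h$, where $\overline{1}h=\overline{1}$ for $h\in H(R)$ and the normalization $T(\overline{1})=1$ finishes the argument. Your only additions — checking that $\f'$ lands in $H\wr\rho(H)$ and spelling out $\overline{1}h=\overline{1}$ via Lemma \ref{lemma: quotients}(i) — are details the paper leaves implicit.
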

\begin{proof}
	For a $k$-algebra $R$ and $h\in H(R)$ we have
	$$\pi_H(\f'(h))=f_h(\overline{1})=T(\overline{1})hT(\overline{1}h)^{-1}=T(\overline{1})hT(\overline{1})^{-1}=h.$$
\end{proof}

\subsection{Free proalgebraic groups}

In this section we recall the definition of free proalgebraic groups from \cite{Wibmer:FreeProalgebraicGroups} and discuss some of their properties needed in the following section.

Let $X$ be a set and $G$ a proalgebraic group. A map $\varphi\colon X\to G(\kb)$ (of sets) \emph{converges to $1$} if for every coalgebraic subgroup $N$ of $G$ the set $X\smallsetminus \varphi^{-1}(N(\kb))$ is finite. In other words, $\varphi$ converges to one if almost all elements of $X$ map to $1$ in any algebraic quotient of $G$.
A pair $(\iota,\Gamma)$ consisting of a proalgebraic group $\Gamma$ and a map $\iota\colon X\to \Gamma(\kb)$ converging to $1$ is a \emph{free proalgebraic group on $X$} if it satisfies the following universal property: If $G$ is a proalgebraic group and $\varphi\colon X\to G(\kb)$ a map converging to $1$, then there exists a unique morphism $\psi\colon\Gamma\to G$ of proalgebraic groups such that
$$
\xymatrix{
	X \ar^-{\iota}[rr] \ar_-{\varphi}[rd] & & \Gamma(\kb) \ar^-{\psi}[ld] \\
	& G(\kb) &
}
$$
commutes. According to \cite[Theorem 2.17]{Wibmer:FreeProalgebraicGroups} there exists a free proalgebraic group on $X$. Clearly, it is unique up to a unique isomorphism. We therefore take the liberty to sometimes speak of ``the'' free proalgebraic group on $X$, rather than of ``a'' free proalgebraic group on $X$. On the other hand, the phrase ``$\Gamma$ is a free proalgebraic group'' means that there exists a map $\iota$ such that $(\iota,\Gamma)$ is a free proalgebraic group on a suitable set $X$.

\begin{lemma} \label{lemma: embed free group}
	Let $(\iota,\Gamma)$ be the free proalgebraic group on a set $X$ and let $F_X$ be the (abstract) free group on $X$.  Then the map $F_X\to\Gamma(\kb)$ determined by $x\mapsto \iota(x)$ is injective.
\end{lemma}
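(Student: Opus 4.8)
The plan is to prove injectivity by showing that every nontrivial reduced word $w\in F_X$ has nontrivial image in $\Gamma(\kb)$. The natural device is the universal property of $\Gamma$: it suffices to produce a single proalgebraic group $G$ together with a map $\varphi\colon X\to G(\kb)$ converging to $1$ such that the homomorphism $F_X\to G(\kb)$ determined by $\varphi$ sends $w$ to a nontrivial element. Indeed, if $\psi\colon\Gamma\to G$ is the morphism supplied by the universal property, then $\psi$ induces a group homomorphism $\Gamma(\kb)\to G(\kb)$ with $\psi(\iota(x))=\varphi(x)$ for all $x\in X$. Hence the two homomorphisms $F_X\to G(\kb)$ obtained, respectively, by first applying $F_X\to\Gamma(\kb)$ and then $\psi$, and by applying $\varphi$ directly, agree on the generators and therefore coincide. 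Consequently the image of $w$ in $\Gamma(\kb)$ cannot be trivial, since its further image in $G(\kb)$ is nontrivial.

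For $G$ I would take a suitable finite group $Q$, regarded as a finite constant group scheme over $k$; this is a (zero-dimensional) algebraic group over an arbitrary field $k$, and since $\kb$ is connected its group of $\kb$-points is canonically identified with $Q$ itself. The word $w$ involves only finitely many generators, say $x_1,\dots,x_n$. Since the abstract free group on $x_1,\dots,x_n$ is residually finite, there is a finite group $Q$ and a homomorphism sending $w$ to a nontrivial element of $Q$. I then extend this to a homomorphism $q\colon F_X\to Q$ by sending every generator of $X$ not among $x_1,\dots,x_n$ to $1$; this is well defined by the universal property of the abstract free group, and still $q(w)\neq 1$ because $w$ is a word in $x_1,\dots,x_n$ only.

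Finally, let $\varphi\colon X\to Q(\kb)=Q$ be the restriction of $q$ to $X$, and apply the universal property to obtain $\psi\colon\Gamma\to Q$ with $\psi(\iota(x))=\varphi(x)=q(x)$, so that the composite $F_X\to\Gamma(\kb)\xrightarrow{\psi}Q$ equals $q$ and hence carries $w$ to $q(w)\neq 1$; this completes the argument as in the first paragraph. I expect the only point requiring genuine care to be the verification that $\varphi$ converges to $1$: because $Q$ is algebraic, the trivial subgroup is coalgebraic, so convergence to $1$ amounts to $\varphi(x)=1$ for all but finitely many $x\in X$, which is exactly what the construction of $q$ guarantees. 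The remaining ingredients, namely residual finiteness of free groups and the fact that finite groups are algebraic groups over $k$, are standard and insensitive to the characteristic, so the argument works over an arbitrary base field.
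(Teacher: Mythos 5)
Your proof is correct, and it shares its engine with the paper's proof --- residual finiteness of free groups, realized geometrically via finite constant group schemes, fed into the universal property of $\Gamma$ --- but it is organized differently, and the difference is worth noting. The paper argues in two steps: first it treats finite $X$, using \emph{all} finite quotients $F_X/N$ at once (each $\varphi_N\colon X\to G_N(\kb)$ converges to $1$ automatically because $X$ is finite, and the kernel of $F_X\to\Gamma(\kb)$ then lands in $\bigcap N=\{1\}$); then, for infinite $X$, it reduces to the finite case by mapping $\Gamma$ to the free proalgebraic group $\Gamma'$ on a finite subset $X'$ containing the letters of a given kernel element. You collapse these two steps into one: for a fixed nontrivial word $w$ you choose a single finite quotient detecting $w$ and extend the quotient map to all of $F_X$ by sending the generators not occurring in $w$ to $1$. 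This is exactly what makes convergence to $1$ hold even for infinite $X$ (only finitely many generators have nontrivial image, and for a map into an algebraic group that is precisely convergence, as you correctly verify), so you never need the auxiliary free proalgebraic group $\Gamma'$ or a separate finite-$X$ lemma. Your version is shorter and handles all cardinalities of $X$ uniformly; the paper's version isolates the finite case, at the cost of an extra reduction step. Both arguments are characteristic-free, and your identification $Q(\kb)=Q$ for the constant group scheme (via connectedness of $\operatorname{Spec}\kb$) and the well-definedness of the extended homomorphism $q$ with $q(w)\neq 1$ are all in order.
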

\begin{proof}
	Let us first assume that $X$ is finite. In characteristic zero, one could use the well-known fact that $\Sl_2(\mathbb{Q})$ contains a free subgroup on two elements and therefore also a free subgroup on any finite number of elements. 
	We will give a proof, in any characteristic, based on the fact that the free group $F_X$ is residually finite (\cite[Prop. 17.5.11]{FriedJarden:FieldArithmetic}). This means that the intersection of all normal finite index subgroups of $F_X$ is trivial. For every normal finite index subgroup $N$ of $F_X$ let $G_N$ be the constant (\'{e}tale) algebraic group corresponding to the finite group $F_X/N$ and set $\varphi_N\colon X\to F_X\to F_X/N=G_N(k)=G_N(\kb)$. Because $X$ is finite, $\varphi_N$ converges to $1$ and there exists a morphism $\psi_N\colon \Gamma\to G_N$ of proalgebraic groups such that
	$$
	\xymatrix{
	 X \ar^-\iota[rr] \ar_-{\varphi_N}[rd] & & \Gamma(\kb) \ar^-{\psi_N}[ld] \\
	 & G_N(\kb) &
	}
	$$
	commutes. This yields a commutative diagram
		$$
	\xymatrix{
		F_X \ar[rr] \ar[rd] & & \Gamma(\kb) \ar^-{\psi_N}[ld] \\
		& G_N(\kb) &
	}
	$$
	that shows that an element in the kernel of $F_X\to \Gamma(\kb)$ lies in $N$. Since the intersection of all $N$'s is trivial, the kernel of $F_X\to \Gamma(\kb)$ is trivial. This proves the lemma for finite $X$.
	
	Let us now assume that $X$ is arbitrary. Assume that $f\in F_X$ lies in the kernel of $F_X\to \Gamma(\kb)$. Then there exists a finite subset $X'$ of $X$ such that $f\in F_{X'}\subseteq F_X$. Let $(\iota',\Gamma')$ be the free proalgebraic group on $X'$ and define $\varphi\colon X\to \Gamma'(\kb)$ by
	$$\varphi(x)=\begin{cases}
	\iota'(x) & \text{ if } x\in X' ,\\
	1 & \text{ if } x\notin X'.
	\end{cases}
$$	
Then $\varphi$ converges to $1$ and there exist a unique morphism $\psi\colon \Gamma\to \Gamma'$ of proalgebraic groups such that 	
	$$
\xymatrix{
	X \ar^-{\iota}[rr] \ar_-{\varphi}[rd] & & \Gamma(\kb) \ar^-{\psi}[ld] \\
	& \Gamma'(\kb) &
}
$$
commutes. This yields a commutative diagram 
$$
\xymatrix{
F_X \ar[r] \ar[d] & \Gamma(\kb) \ar^-{\psi}[d]\\
F_{X'} \ar[r] & \Gamma'(\kb)	
}
$$
where $F_X\to F_{X'}$ restricts to the identity on $F_{X'}\subseteq F_X$. As $F_{X'}\to \Gamma'(\kb)$ is injective, we see that necessarily $f=1$. Thus $F_X\to\Gamma(\kb)$ is injective.	
%
%
\end{proof}

The above lemma shows, in particular, that the map $\iota\colon X\to \Gamma(\kb)$ is injective. We will therefore from now on identify $X$ with a subset of $\Gamma(\kb)$.

%


\begin{lemma} \label{lemma: surjective to quotient}
	Assume that $k$ is algebraically closed.  Let $H$ be a closed finite index subgroup of the free proalgebraic group $\Gamma$ on a set $X$. Then the map $F_X\to (H\backslash\Gamma)(k)$ from the (abstract) free group on $X$ to $(H\backslash\Gamma)(k)$ is surjective.
\end{lemma}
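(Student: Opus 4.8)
The plan is to reduce the assertion to the statement that $F_X$ surjects onto every finite constant quotient of $\Gamma$, and then to deduce that statement directly from the universal property of the free proalgebraic group.

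First I would pass from $\Gamma$ to an algebraic quotient. By Lemma \ref{lemma: exists coalgebraic subgroup is finite index subgroup} there is a coalgebraic subgroup $N$ of $\Gamma$ with $N\leq H$, so $G'=\Gamma/N$ is algebraic, and exactly as in the proof of Lemma \ref{lemma: quotients}(ii) the canonical maps identify $H\backslash\Gamma$ with $(H/N)\backslash G'$, where $H'=H/N$ is a closed finite index subgroup of $G'$. Since a closed subgroup of finite index in an algebraic group contains the identity component, $H'$ contains $(G')^\circ$, and hence the projection $G'\to Q=G'/(G')^\circ$ onto the group of connected components factors the quotient map $G'\to H'\backslash G'$. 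As $k$ is algebraically closed, $Q$ is a finite constant group, and on $k$-points we obtain $F_X\to \Gamma(k)\to Q(k)\to (H'\backslash G')(k)=(H\backslash\Gamma)(k)$, where the last map is surjective since $G'(k)\to (H'\backslash G')(k)$ is surjective by Lemma \ref{lemma: quotients}(ii) and factors through $Q(k)$. It therefore suffices to show that $F_X\to Q(k)$ is surjective.

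This is the heart of the argument. Note that $p\colon \Gamma\to G'\to Q$ is a quotient map (a composite of quotient maps) onto a finite constant group. I claim that for any such $p$ the subgroup of $Q$ generated by $q(X)$ is all of $Q$, where $q\colon X\to Q(\kb)$ is the restriction of $p$ to $X\subseteq\Gamma(\kb)$. Since $Q$ is algebraic, $\ker(p)$ is coalgebraic, so $q$ converges to $1$. Let $Q_0\leq Q$ be the subgroup generated by $q(X)$; then $q$ factors through $Q_0(\kb)$ and still converges to $1$, so by the universal property there is a morphism $\psi\colon\Gamma\to Q_0$ with $\psi\circ\iota=q$. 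The composite $\Gamma\xrightarrow{\psi}Q_0\hookrightarrow Q$ restricts to $q$ on $X$, so by the uniqueness clause of the universal property it must equal $p$. Thus the quotient map $p$ factors through the closed embedding $Q_0\hookrightarrow Q$; dualizing, $k[Q]\to k[Q_0]$ is both injective and surjective, so $Q_0\hookrightarrow Q$ is an isomorphism and $q(X)$ generates $Q$. Hence the image of $F_X$ in $Q(k)$, being the subgroup generated by $q(X)$, is all of $Q(k)$, as needed.

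The main obstacle is really the reduction step, namely arranging that $(H\backslash\Gamma)(k)$ is controlled by a finite constant quotient of $\Gamma$ so that the universal property becomes applicable; the passage to $Q=G'/(G')^\circ$ does exactly this. Once this is in place, the essential point is the factorization forced by the uniqueness in the universal property, which is the precise analogue of the density of an abstract free group in the corresponding free profinite group; this is where the substance of the proof sits. The remaining manipulations with coset spaces of finite group schemes I expect to be routine given Lemma \ref{lemma: quotients}.
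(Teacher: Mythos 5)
Your argument rests on one step that is false as stated, and it is the step that makes the whole reduction work: the claim that a closed finite-index subgroup of an algebraic group contains the identity component. Section 1 of the paper makes no characteristic assumption, and in characteristic $p$ this claim fails for non-reduced groups. Concretely, the trivial subgroup of $\mu_p$ has finite index in the sense of this paper ($k[\mu_p]=k[x]/(x^p-1)$ is $p$-dimensional, so $[\mu_p:1]=p$), yet it does not contain $(\mu_p)^\circ=\mu_p$, because $\mu_p=\spec k[x]/((x-1)^p)$ is connected. So, as written, your factorization of $G'\to H'\backslash G'$ through $Q=G'/(G')^\circ$ is unjustified. The step can be repaired, but only by invoking an input your write-up never mentions: free proalgebraic groups are reduced (\cite[Remark 2.20]{Wibmer:FreeProalgebraicGroups}), hence so is $G'=\Gamma/N$ (as $k[G']\hookrightarrow k[\Gamma]$), hence so is $H'\backslash G'$ (as in the proof of Lemma \ref{lemma: quotients}(iii), via faithful flatness of $G'\to H'\backslash G'$). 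A finite reduced scheme over the algebraically closed field $k$ is a disjoint union of copies of $\spec k$, so the connected group $(G')^\circ$ maps into the single reduced point $\overline{1}$, whose preimage is $H'$ by Lemma \ref{lemma: quotients}(i); this gives $(G')^\circ\leq H'$. In characteristic zero the claim would be automatic by Cartier's theorem, but the lemma is not restricted to characteristic zero, so this reducedness argument is genuinely needed.

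With that repaired, the rest of your proof is correct, and it takes a different route from the paper's. The paper never passes to the component group: it cites \cite[Theorem 2.17]{Wibmer:FreeProalgebraicGroups} to the effect that $X$ topologically generates $\Gamma$, uses reducedness of $\Gamma$ to see that $H\backslash\Gamma$ is a finite set of reduced $k$-points, and argues that if $F_X$ missed one fiber of $\Gamma\to H\backslash\Gamma$, then the union of the remaining fibers would be a proper closed subscheme of $\Gamma$ containing $F_X$ in its $k$-points, contradicting topological generation. Your universal-property argument, where the uniqueness clause forces the quotient map $\Gamma\to Q$ to factor through the closed subgroup generated by the image of $X$, is a correct, self-contained replacement for that citation, valid for any quotient of $\Gamma$ that is a finite constant group; the price is the extra reduction through $N$ and $Q$, which is exactly where the characteristic-$p$ issue above enters.
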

\begin{proof}
Let $\langle X\rangle$ denote the smallest closed subgroup of $\Gamma$ such that $X\subseteq \langle X\rangle(k)$. This is well-defined because the intersection of closed subgroups containing $X$ is a closed subgroup containing $X$. According to \cite[Theorem 2.17]{Wibmer:FreeProalgebraicGroups} we have $\langle X\rangle =\Gamma$. By \cite[Remark 2.20]{Wibmer:FreeProalgebraicGroups} the proalgebraic group $\Gamma$ is reduced. Therefore $H\backslash\Gamma$ is a reduced finite $k$-scheme by Lemma \ref{lemma: quotients} (iii). So, geometrically, $H\backslash\Gamma$ is simply a finite set of $k$-points with no additional structure. Let $s_1,\ldots,s_n$ denote these $k$-points and let $\pi\colon \Gamma\to H\backslash\Gamma$ be the canonical map. By Lemma \ref{lemma: quotients} (ii) the map $\pi\colon \Gamma(k)\to (H\backslash\Gamma)(k)$ is surjective. So $\Gamma$ is the disjoint union of the non-empty open and closed subschemes $\Gamma_i=\pi^{-1}(s_i)$ $(i=1,\ldots,n)$. Suppose there exists an $i\in \{1,\ldots,n\}$ such that $F_X\cap \Gamma_i(k)=\emptyset$. Then $$F_X\subseteq \bigcup_{j=1 \atop j\neq i}^n\Gamma_i(k) \text{ and } \bigcup_{j=1 \atop j\neq i}^n\Gamma_i$$
is a proper closed subscheme of $\Gamma$ containing $F_X$ in its $k$-points. As in \cite[Theorem 4.3, (a)]{Waterhouse:IntroductiontoAffineGroupSchemes} one sees that $\langle X\rangle$ is the smallest closed subscheme of $\Gamma$ containing $F_X$ in its $k$-points. We thus obtain a contradiction to $\langle X\rangle =\Gamma$. Thus $F_X\cap\Gamma_i(k)\neq \emptyset$ for every $i$ and so $F_X\to (H\backslash\Gamma)(k)$ is surjective. 
\end{proof}

\subsection{Finite index subgroups of free proalgebraic groups are free} \label{subsec: Finite index subgroups of free proalgebraic groups are free}

In this section we show that a closed finite index subgroup of a free proalgebraic group is itself free.
For the proof we need to recall the notion of \emph{Schreier transversal} (see, e.g., \cite[Section~ 3.1]{RibesSteinberg:AWreathProductApproachToClassicalSubgroupTheorems}). Let $F_X$ be the (abstract) free group on a set $X$ and let $H$ be a subgroup of $F_X$. A \emph{Schreier transversal} of $H$ in $F_X$ is a subset $T$ of $F_X$ such that
\begin{itemize}
	\item $T$ is a complete system of representatives of the right cosets of $H$ in $F_X$, i.e., the map $T\to H\backslash F_X,\ t\mapsto Ht$ is bijective and
	\item $T$ is closed under taking prefixes, i.e., if $x_1,\ldots,x_n\in X\cup X^{-1}$ such that $x_1\ldots x_n$ is in reduced form and belongs to $T$, then also $x_1\ldots x_i$ belongs to $T$ for $i=0,\ldots,n-1$. (In particular, $1\in T$.)
\end{itemize}
Schreier transversals exist for every subgroup $H$ of $F_X$ (\cite[Lemma 3.1.1]{RibesSteinberg:AWreathProductApproachToClassicalSubgroupTheorems}). 
Let $\mu\colon F_X\to T$ be the unique map such that $H\omega=H\mu(\omega)$ for all $\omega\in F_X$.
The following theorem explains the main use of Schreier transversals. See \cite[Theorem 3.6.1]{RibesZalesskii:ProfiniteGroups} or \cite[Section 17.5]{FriedJarden:FieldArithmetic}.
\begin{theo}[Nielsen-Schreier] \label{theo: NielsenSchreier}
	Let $H$ be a subgroup of the free group $F_X$ and let $T$ be a Schreier transversal of $H$ in $F_X$. Set
	$$B=\{tx\mu(tx)^{-1}|\ t\in T, \ x\in X,\ tx\mu(tx)^{-1}\neq 1 \}.$$
	Then $H$ is a free group on $B$. Furthermore, if $[F_X:H]$ is finite, then $|B|=1+[F_X:H](|X|-1)$. 
\end{theo}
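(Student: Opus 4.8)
The plan is to check three things about $B$ — that $B\subseteq H$, that $B$ generates $H$, and that $B$ is free on $B$ — and then to read off the rank formula from a count of generators. It is convenient to set $\gamma(t,y)=ty\,\mu(ty)^{-1}$ for $t\in T$ and $y\in X\cup X^{-1}$, so that $B=\{\gamma(t,x):t\in T,\ x\in X,\ \gamma(t,x)\neq1\}$. Since $H(ty)=H\mu(ty)$ we have $\gamma(t,y)\in H$ immediately, giving $B\subseteq H$; and because $\mu(ty)\,y^{-1}$ lies in the coset $Ht$ and $t\in T$, one gets $\mu(\mu(ty)y^{-1})=t$, hence $\gamma(t,y)^{-1}=\gamma(\mu(ty),y^{-1})$. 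Thus $B\cup B^{-1}$ is precisely the set of nontrivial $\gamma(t,y)$ with $y\in X\cup X^{-1}$.

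For generation I would use Schreier rewriting. Writing $h\in H$ as a reduced word $y_1\cdots y_m$ with $y_i\in X\cup X^{-1}$, put $t_0=1$ and $t_i=\mu(y_1\cdots y_i)$; then $t_i=\mu(t_{i-1}y_i)$ because $Ht_{i-1}=H(y_1\cdots y_{i-1})$, and the representatives telescope:
$$h=\prod_{i=1}^{m}t_{i-1}\,y_i\,t_i^{-1}=\prod_{i=1}^{m}\gamma(t_{i-1},y_i),$$
using $t_0=1$ and $t_m=\mu(h)=1$ (as $h\in H$). Each factor lies in $B\cup B^{-1}\cup\{1\}$, so $B$ generates $H$.

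Freeness of $B$ is the step I expect to be the main obstacle. The cleanest route is to show that the generating set $\{\gamma(t,y)\}$ is \emph{Nielsen reduced}, via a ``central letter'' analysis: for a nontrivial $\gamma(t,y)=ty\,\mu(ty)^{-1}$, the reduced word of $\gamma(t,y)$ contains a distinguished occurrence of $y$ that survives all cancellation, essentially because $t$ and $\mu(ty)$ are reduced transversal words whose only possible cancellation against $y$ is the obvious one. Given a reduced word $\gamma(s_1,z_1)\cdots\gamma(s_n,z_n)$ in $B\cup B^{-1}$ — meaning no $(s_{i+1},z_{i+1})$ equals $(\mu(s_iz_i),z_i^{-1})$ — one argues inductively that the cancellation at each junction $\mu(s_iz_i)^{-1}s_{i+1}$ stops short of both neighbouring central letters. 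This is exactly where the Schreier (prefix-closed) condition enters: it forces transversal representatives to be geodesic, so that the no-adjacent-inverse hypothesis keeps cancellation away from the central letters. All $n$ central letters then survive, the product has positive length, and so it is $\neq1$; the same analysis shows distinct nontrivial pairs $(t,x)$ yield distinct $\gamma(t,x)$. Alternatively, one can make freeness transparent by passing to the Schreier coset graph, in which a Schreier transversal is a spanning tree and the elements of $B$ index the edges outside the tree; freeness is then the standard fact that the fundamental group of a graph is free on the non-tree edges.

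For the rank formula, assume $[F_X:H]<\infty$, so $|T|=[F_X:H]$. By the previous paragraph $|B|=|T|\,|X|-N$, where $N$ is the number of pairs $(t,x)\in T\times X$ with $\gamma(t,x)=1$, i.e.\ with $tx\in T$. I would exhibit a bijection between such pairs and $T\smallsetminus\{1\}$: send $(t,x)$ with $tx\in T$ to the longer of the two words $t,\,tx$ (their lengths differ by exactly one), and conversely send a nontrivial $t'\in T$, with final letter $x^{\pm1}$ for some $x\in X$, to the pair $(t'',x)$, where $t''$ is the length-$(|t'|-1)$ prefix of $t'$ when the sign is $+$, and to $(t',x)$ when the sign is $-$; prefix-closedness of $T$ guarantees these prefixes lie in $T$. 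Hence $N=|T|-1=[F_X:H]-1$ and
$$|B|=[F_X:H]\,|X|-\bigl([F_X:H]-1\bigr)=1+[F_X:H]\bigl(|X|-1\bigr),$$
as claimed.
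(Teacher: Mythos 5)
The first thing to say is that the paper does not prove Theorem \ref{theo: NielsenSchreier} at all: it states it as a classical result, with pointers to \cite[Theorem 3.6.1]{RibesZalesskii:ProfiniteGroups} and \cite[Section 17.5]{FriedJarden:FieldArithmetic}, and then uses it as an input in the proof of Theorem \ref{theo: proalgebraic NielsenSchreier}. So there is no in-paper argument to compare against; what you have written is essentially the classical argument from those references: Schreier rewriting for generation, a cancellation (Nielsen-type) argument for freeness, and a count of the trivial generators for the index formula.

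Judged on its own, your proposal is correct in every step you carry out in full: the inclusion $B\subseteq H$, the inverse formula $\gamma(t,y)^{-1}=\gamma(\mu(ty),y^{-1})$, the telescoping product proving generation, and the bijection between $\{(t,x)\in T\times X:\ tx\in T\}$ and $T\smallsetminus\{1\}$, which together with injectivity of $(t,x)\mapsto\gamma(t,x)$ on nontrivial pairs gives $|B|=|T||X|-(|T|-1)=1+[F_X:H](|X|-1)$. The freeness step is only sketched, but the mechanism you name is the right one and does complete: if $\gamma(t,y)\neq 1$ then $t\,y\,\mu(ty)^{-1}$ is reduced as written (if $t$ ended in $y^{-1}$, then $ty$ would be a prefix of $t$, hence in $T$, forcing $\gamma(t,y)=1$; if $\mu(ty)$ ended in $y$, then its prefix $\mu(ty)y^{-1}$ would be an element of $T$ lying in the coset $Ht$, forcing $\mu(ty)=ty$), and cancellation at a junction can reach a central letter only by making one factor trivial (prefix-closedness again) or by making two adjacent factors mutually inverse, both excluded. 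One caveat: your parenthetical justification that prefix-closedness ``forces transversal representatives to be geodesic'' is false. For instance, let $H$ be the kernel of the map from the free group on $\{a,b\}$ to $\mathbb{Z}$ sending $a\mapsto 1$, $b\mapsto 2$; then $\{a^n:\ n\in\mathbb{Z}\}$ is a Schreier transversal, but $a^2$ is not a shortest representative of its coset, which contains $b$. This does not damage the proof---the cancellation analysis needs only prefix-closedness, not minimality of length---but the remark as stated is wrong. Your alternative finish via the Schreier coset graph, with the transversal as spanning tree and $B$ indexing the non-tree edges, is likewise a complete and standard route.
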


We are now prepared to prove our proalgebaic analog of the Nielsen-Schreier Theorem.

\begin{theo} \label{theo: proalgebraic NielsenSchreier}
	Assume that $k$ is algebraically closed. Let $H$ be a closed finite index subgroup of the free proalgebraic group $\Gamma$ on the set $X$. Then $H$ is a free proalgebraic group. Moreover, if $X$ is finite, then $H$ is a free proalgebraic group on a set of cardinality $1+[\Gamma:H](|X|-1)$. If $|X|$ is infinite, then $H$ is a free proalgebraic group on a set of cardinality $|X|$.
\end{theo}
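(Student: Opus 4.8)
The plan is to identify $H$ with the free proalgebraic group $\Delta$ on the Schreier basis $B$ produced by the abstract Nielsen--Schreier theorem (Theorem~\ref{theo: NielsenSchreier}), by constructing a pair of mutually inverse morphisms between them. First I would fix the combinatorial data. Since $k$ is algebraically closed and $\Gamma$ is reduced, $\Sigma=H\backslash\Gamma$ is, geometrically, a finite set of reduced $k$-points (Lemma~\ref{lemma: quotients}(iii)); by Lemma~\ref{lemma: embed free group} we regard $F_X$ as a subgroup of $\Gamma(k)$, and by Lemma~\ref{lemma: surjective to quotient} the map $F_X\to\Sigma(k)$ is surjective. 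Setting $H_0=H(k)\cap F_X$, the cosets $H_0\backslash F_X$ biject with $\Sigma(k)$, so $H_0$ has finite index $[\Gamma:H]$ in $F_X$. Choosing a Schreier transversal $T$ of $H_0$ in $F_X$, the bijection $T\to\Sigma(k)$ turns $T$ into a section $T\colon\Sigma\to\Gamma$ of the canonical map with $T(\overline 1)=1$ (as $1\in T$), and Theorem~\ref{theo: NielsenSchreier} exhibits $H_0$ as the abstract free group on $B$, of the asserted cardinality. Let $\Delta$ be the free proalgebraic group on $B$ with structure map $\iota_\Delta$; note $B\subseteq H_0\subseteq H(k)$.

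Next I would produce the two morphisms. A convergence check --- using that $T$ is finite and that every closed finite index subgroup of $\Gamma$ contains a coalgebraic subgroup of $\Gamma$ (Lemma~\ref{lemma: exists coalgebraic subgroup is finite index subgroup}), which lets one compare coalgebraic subgroups of $H$ with those of $\Gamma$ --- shows that the inclusion $B\hookrightarrow H(k)$ converges to $1$, giving a morphism $u\colon\Delta\to H$ with $u(\iota_\Delta(b))=b$. In the other direction I would apply Theorem~\ref{theo: embedding theorem} with the section $T$ to obtain a closed embedding $\f\colon\Gamma\to H\wr\rho(\Gamma)$ with $\f(x)=(f_x,\rho(x))$ and $f_x(s)=T(s)xT(sx)^{-1}\in B\cup\{1\}$. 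Lifting each $f_x$ through $\iota_\Delta$ to a morphism $\tilde f_x\colon\Sigma\to\Delta$ (possible since $\Sigma$ is a finite set of reduced points), and checking convergence of $x\mapsto(\tilde f_x,\rho(x))$ against the coalgebraic subgroups $P^\Sigma$ of $\Delta\wr\rho(\Gamma)$ arising from coalgebraic $P\trianglelefteq\Delta$, the universal property of $\Gamma$ yields a morphism $v\colon\Gamma\to\Delta\wr\rho(\Gamma)$ with $v(x)=(\tilde f_x,\rho(x))$. By construction $(u\wr\rho(\Gamma))\circ v$ and $\f$ agree on $X$, hence coincide. Since $\f(H)\subseteq H\wr\rho(H)$ and the $\rho$-components of $v$ and $\f$ agree, $v$ restricts to $v'\colon H\to\Delta\wr\rho(H)$, and I set $w=\pi_\Delta\circ v'\colon H\to\Delta$.

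The two identities then follow. From the relation $u\circ\pi_\Delta=\pi_H\circ(u\wr\rho(H))$ and $(u\wr\rho(H))\circ v'=\f|_H$, together with Lemma~\ref{lemma: gives id}, one gets $u\circ w=\pi_H\circ\f|_H=\id_H$. For the reverse composite it suffices to verify $w(b)=\iota_\Delta(b)$ for $b\in B$, and here lies the main obstacle. Let $\Delta_0\leq\Delta(k)$ be the abstract subgroup generated by $\iota_\Delta(B)$; by Lemma~\ref{lemma: embed free group} it is abstractly free on $\iota_\Delta(B)$, so $u$ restricts to an isomorphism $\Delta_0\to H_0$, in particular an injection. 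A straightforward induction on word length in $X^{\pm 1}$, using the wreath-product multiplication (whose function components are products and $\rho(\Gamma)$-translates of the $\Delta_0$-valued maps $\tilde f_x$), shows that for every $g\in F_X$ the function-part $G_g$ of $v(g)$ takes values in $\Delta_0$. Writing $v(b)=(G_b,\rho(b))$ we then have $w(b)=G_b(\overline 1)\in\Delta_0$ and $u(G_b(\overline 1))=f_b(\overline 1)=T(\overline 1)\,b\,T(\overline 1\cdot b)^{-1}=b$, using $\overline 1\cdot b=\overline 1$ (as $b\in H(k)$) and $T(\overline 1)=1$. Since $u$ is injective on $\Delta_0$ and $u(\iota_\Delta(b))=b$, this forces $G_b(\overline 1)=\iota_\Delta(b)$.

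Consequently $w\circ u$ agrees with $\id_\Delta$ on the generators $\iota_\Delta(b)$ and, by the uniqueness in the universal property of $\Delta$, equals $\id_\Delta$. Thus $u$ and $w$ are mutually inverse isomorphisms and $H\cong\Delta$ is free on $B$. The cardinality statements are then read off from Theorem~\ref{theo: NielsenSchreier}: for finite $X$ one has $|B|=1+[\Gamma:H](|X|-1)$, while for infinite $X$ the same formula gives $|B|=|X|$ by cardinal arithmetic, since $[\Gamma:H]$ is finite. I expect the genuine work to be concentrated in the two convergence verifications and in the induction showing that $G_g$ is $\Delta_0$-valued; the remaining bookkeeping with wreath products and retractions is formal.
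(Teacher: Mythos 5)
Your proposal is correct, and it reaches the theorem by a genuinely different route at the decisive step, even though the scaffolding is the same as in the paper: the Schreier transversal promoted to a scheme-theoretic section $T\colon\Sigma\to\Gamma$ with $T(\overline{1})=1$, the embedding $\phi\colon\Gamma\to H\wr\rho(\Gamma)$ of Theorem~\ref{theo: embedding theorem}, the retraction $\pi$, Lemma~\ref{lemma: gives id}, and convergence checks against coalgebraic subgroups. The paper verifies the universal property of $H$ directly against an arbitrary pair $(G,\varphi)$: uniqueness of $\psi$ comes from the retraction diagram, existence from an explicit Schreier-rewriting computation (the bookkeeping with the prefixes $u_i=\mu(x_1\ldots x_i)$), and Theorem~\ref{theo: NielsenSchreier} is invoked only to count $|B|$. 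You instead test against the single target $\Delta$, the free proalgebraic group on $B$, and produce mutually inverse morphisms: $u\circ w=\id_H$ falls out formally from Lemma~\ref{lemma: gives id} and naturality of $\pi$ in its group argument, while $w\circ u=\id_\Delta$ is deduced from injectivity of $u$ on the abstract subgroup $\Delta_0$ generated by $\iota_\Delta(B)$; this is where you use the abstract Nielsen--Schreier theorem essentially, since injectivity rests on $u|_{\Delta_0}\colon\Delta_0\to H_0$ carrying a free basis to a free basis. Your induction that the function parts of $v(g)$, $g\in F_X$, are $\Delta_0$-valued is sound because $\Sigma$ is finite and reduced, so everything is determined on $k$-points; and freeness of $H$ transfers along the isomorphism $H\cong\Delta$, so no separate uniqueness argument is needed. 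The trade-off: the paper's computation is self-contained (abstract Nielsen--Schreier enters only for cardinality), whereas your argument eliminates that page of rewriting at the cost of using Theorem~\ref{theo: NielsenSchreier} for freeness of $H_0$. Two points deserve care when writing this up. First, checking convergence of $x\mapsto(\tilde f_x,\rho(x))$ only against the subgroups $P^\Sigma$ is legitimate, but only because these form a cofinal family of coalgebraic subgroups of $\Delta\wr\rho(\Gamma)$ --- this requires the paper's ``product observation'' applied to $N\cap\Delta^\Sigma$ together with the fact that $\rho(x)=1$ for all but finitely many $x\in X$. Second, in your first convergence check, passing from coalgebraic subgroups of $H$ to those of $\Gamma$ is not a consequence of Lemma~\ref{lemma: exists coalgebraic subgroup is finite index subgroup} alone; it is the separate (true) fact that every coalgebraic subgroup of $H$ contains $H\cap N'$ for some coalgebraic $N'$ of $\Gamma$, which the paper imports from the literature on free proalgebraic groups.
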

\begin{proof}
	Let $F_X$ denote the subgroup of $\Gamma(k)$ generated by $X$. We know from Lemma \ref{lemma: embed free group} that $F_X$ is the (abstract) free group on $X$. Let $\overline{T}$ be a Schreier transversal of $F_X\cap H(k)$ in $F_X$. 
	Let $\mu\colon F_X\to \overline{T}$ be the unique map such that $(F_X\cap H(k))\omega=(F_X\cap H(k))\mu(\omega)$ for any $\omega\in F_X$. Set
	$$B=\left\{tx\mu(tx)^{-1}|\ t\in \overline{T},\ x\in X,\ tx\mu(tx)^{-1}\neq 1 \right\}.$$
	Note that $tx\mu(tx)^{-1}\in F_X\cap H(k)$ because $(F_X\cap H(k))tx=(F_X\cap H(k))\mu(tx)$. In particular, $B\subseteq H(k)$. We will prove that $H$ is a free proalgebraic group by showing that the inclusion map $\iota\colon B\to H(k)$ satisfies the universal property. 
	
Let us first show that $\iota$ converges to $1$.
Let $N$ be a coalgebraic subgroup of $H$. By \cite[Lemma 2.7]{Wibmer:FreeProalgebraicGroups} there exists a coalgebraic subgroup $N'$ of $\Gamma$ such that $H\cap N'\subseteq N$. Then $x\in N'(k)$  for a subset $X'$ of $X$ with $X\smallsetminus X'$ finite. Since $N'$ is normal in $\Gamma$, we see that $txt^{-1}\in N'(k)$ for $x\in X'$ and $t\in \overline{T}$. 

By Lemma \ref{lemma: exists coalgebraic subgroup is finite index subgroup} there exists a coalgebraic subgroup $N''$ of $\Gamma$ such that $N''\subseteq H$. In particular, $x\in N''(k)$ for all $x\in X''$, where $X''$ is a subset of $X$ with $X\smallsetminus X''$ finite. Because $N''$ is normal in $\Gamma$, we have $txt^{-1}\in N''(k)\leq H(k)$ for all $x\in X''$ and $t\in\overline{T}$. So $txt^{-1}\in F_X\cap H(k)$, i.e., $\mu(tx)=\mu(t)=t$ for all $x\in X''$ and $t\in \overline{T}$.


For $x\in X'\cap X''$ and $t\in\overline{T}$ we have $tx\mu(tx)^{-1}=txt^{-1}\in N'(k)$ and so $tx\mu(tx)^{-1}\in H(k)\cap N'(k)\leq  N(k)$. Thus $\iota$ converges to $1$.

To verify the universal property, let $G$ be a proalgebraic group and let $\varphi\colon B\to G(k)$ be a map converging to $1$.
As in Section \ref{subsec: The embedding theorem}, set $\Sigma=H\backslash\Gamma$ and let the morphism $\rho\colon \Gamma\to \Aut(\Sigma)^{\op}$ of proalgebraic groups be defined through the right action of $\Gamma$ on $\Sigma$. 
By Lemma \ref{lemma: surjective to quotient} the map $F_X\to \Sigma(k)$ is surjective and therefore induces bijections 
\begin{equation} \label{eq:bijection}
\overline{T}\simeq (F_X\cap H(k))\backslash F_X\to \Sigma(k).
\end{equation}
As in the proof of Lemma \ref{lemma: surjective to quotient}, the $k$-scheme $\Sigma$ is geometrically simply a finite set of $k$ points with no additional structure. So, to specify a morphism from $\Sigma$ into a $k$-scheme $X$ is equivalent to specifying some $k$-points of $X$, one for each $k$-point of $\Sigma$. Thus there exists a morphism $T\colon \Sigma\to \Gamma$ such that 
$T\colon \Sigma(k)\to \Gamma(k)$ is the inverse of the bijection (\ref{eq:bijection}) followed by the inclusion $\overline{T}\to \Gamma(k)$. In particular, $T\colon\Sigma\to \Gamma$ is a section of the canonical map $\pi\colon \Gamma\to H\backslash\Gamma$ with $T(\overline{1})=1$ and we have a commutative diagram
\begin{equation} \label{eq: com dia}
\xymatrix{
F_X \ar^-\mu[r] \ar_-\pi[d] & \overline{T} \\
\Sigma(k) \ar_-T[ur]	
}
\end{equation}
We consider the closed embedding $\f\colon\Gamma\to H\wr\rho(\Gamma)$ as in Theorem \ref{theo: embedding theorem}.

We have to show that there exists a unique morphism $\psi\colon H\to G$ of proalgebraic groups such that $\psi(b)=\varphi(b)$ for all $b\in B$. We first establish the uniqueness of $\psi$. 
Using Lemma \ref{lemma: gives id} and the basic properties of wreath products, we obtain the commutative diagram
$$
\xymatrix{
\Gamma \ar^-\f[r] & H\wr\rho(\Gamma) \ar^-{\psi\wr\rho(\Gamma)}[r] & G\wr \rho(\Gamma) \\
H \ar@{^(->}[u] \ar^-{\f|_H}[r] \ar_-\id[rd] & H\wr \rho(H) \ar^{\pi_H}[d] \ar@{^(->}[u] \ar^-{\psi\wr \rho(H)}[r] & G\wr \rho(H) \ar@{^(->}[u] \ar^-{\pi_G}[d] \\
& H \ar^-\psi[r] & G	
}
$$
It follows that $\psi=\pi_G\circ (\psi\wr\rho(H))\circ\f|_H.$ For $x\in X\subseteq\Gamma(k)$ we have
\begin{equation} \label{eq: values on X}
(\psi\wr\rho(\Gamma))(\f(x))=(\psi f_x,\rho(x)),
\end{equation} where, as in Theorem \ref{theo: embedding theorem}, $f_x\colon \Sigma\to H$ is given by $f_x(s)=T(s)xT(sx)^{-1}$ for $s\in\Sigma(R)$ and $R$ a $k$-algebra. Let $s_1,\ldots,s_n$ be the $k$-points of $\Sigma$ and set $t_i=T(s_i)$ for $i=1,\ldots,n$. Then $\overline{T}=\{t_1,\ldots,t_n\}$ and $\pi(t_i)=s_i$. The morphism $\psi f_x\colon \Sigma\to G$ is completely determined by its values on $s_1,\ldots,s_n$. Moreover, 
$$T(s_ix)=T(\pi(t_i)x)=T(\pi(t_i x))=\mu(t_ix)$$
by (\ref{eq: com dia}) and so 
\begin{equation} \label{eq: formula}
(\psi f_x)(s_i)=\psi(t_ix\mu(t_ix)^{-1})=\varphi(t_ix\mu(t_ix)^{-1}),
\end{equation}
where we have extended $\varphi\colon B\to G(k)$ to $B\cup \{1\}\to G(k)$ by $\varphi(1)=1$.
Like any morphism on $\Gamma$, the morphism $(\psi\wr\rho(\Gamma))\circ\f$ is completely determined by its values on $X$. But then (\ref{eq: values on X}) and (\ref{eq: formula}) show that $(\psi\wr\rho(\Gamma))\circ\f$ is completely determined by $\varphi$. Thus  $\psi=\pi_G\circ (\psi\wr\rho(H))\circ\f|_H$ is completely determined by $\varphi$. This proves the uniqueness of $\psi$.

Let us next establish the existence of $\psi$. For $x\in X$, let $g_x\colon \Sigma\to G$ be the morphism of $k$-schemes determined by $g_x(s_i)=\varphi(t_ix\mu(t_ix)^{-1})=\varphi(f_x(s_i))$ for $i=1,\ldots,n$
 and consider the map $\chi\colon X\to (G\wr\rho(\Gamma))(k)$ defined by $\chi(x)=(g_x,\rho(x))$. To show that $\chi$ converges to $1$ we need an observation about coalgebraic subgroups: Assume that $N$ is coalgebraic subgroup of $G^n$ and for $i=1,\ldots,n$ let $\lambda_i\colon G\to G^n$ denote the inclusion into the $i$-th factor. Then $\lambda_i^{-1}(N)$ is a coalgebraic subgroup of $G$ and therefore also $N_1=\lambda_1^{-1}(N)\cap\ldots\cap\lambda_n^{-1}(N)$ is a coalgebraic subgroup of $G$. Moreover, $N_1^n\leq N$.

Now let $N$ be a coalgebraic subgroup of $G\wr\rho(\Gamma)$. We have to show that $\chi$ maps all but finitely many $x\in X$ into $N(k)$. The intersection $N_1=N\cap G^\Sigma$ of $N$ with the subgroup $G^\Sigma$ of $G\wr\rho(\Gamma)=G^\Sigma\rtimes\rho(\Gamma)$ is a coalgebraic subgroup of $G^\Sigma$. Since $G^\Sigma$ can be identified with $G^n$, we can use the above observation to find a coalgebraic subgroup $N_2$ of $G$ such that $N_2^\Sigma\leq N_1$. Because $\varphi\colon B\to G(k)$ converges to $1$, we have $g_x(s_i)=\varphi(t_ix\mu(t_ix)^{-1})\in N_2(k)$ for all but finitely many pairs $(i,x)\in \{1,\ldots,n\}\times X$. Then $g_x\in N_2^\Sigma(k)\leq N_1(k)\leq N(k)$ for all but finitely many $x$. Because $\rho(\Gamma)\leq \Aut(\Sigma)^{\op}$ is an algebraic group, $\rho(x)=1$ for all but finitely many $x\in X$. Therefore $\chi(x)=(g_x,\rho(x))\in N(k)$ for all but finitely many $x\in X$. So $\chi$ converges to $1$ and from the universal property of $\Gamma$ we obtain a morphism $\tau\colon \Gamma \to G\wr\rho(\Gamma)$ of proalgebraic groups such that $\tau(x)=\chi(x)$ for all $x\in X$. Note that the composition $\Gamma\xrightarrow{\tau}G\wr\rho(\Gamma)\to \rho(\Gamma)$ is simply $\rho$ because this in true on $X$. It follows that $\tau$ maps $H$ into $G\wr\rho(H)$. We will show that $\psi=\pi_G\circ\tau|_H$ extends $\varphi$. That is, we have to show that $\psi(b)=\varphi(b)$ for all $b\in B$. This is really just a computation. In fact, it is the same computation as in the proof of \cite[Theorem 3.2.1]{RibesSteinberg:AWreathProductApproachToClassicalSubgroupTheorems}. For the sake of completeness we include the details.

For $\omega\in F_X\subseteq \Gamma(k)$ let $g_\omega\colon \Sigma\to G$ be defined by $\tau(\omega)=(g_\omega,\rho(\omega))\in (G\wr\rho(\Gamma))(k)$. Note that this is compatible with the above definition of $g_x$ for $x\in X$. For $\omega_1,\omega_2\in F_X$ we have
$$(g_{\omega_1\omega_2},\rho(\omega_1\omega_2))=\tau(\omega_1\omega_2)=\tau(\omega_1)\tau(\omega_2)=(g_{\omega_1},\rho(\omega_1))(g_{\omega_2},\rho(\omega_2))=(g_{\omega_1}\rho(\omega_1)(g_{\omega_2}),\rho(\omega_1)\rho(\omega_2)).$$
Therefore $g_{\omega_1\omega_2}=g_{\omega_1}\cdot\rho(\omega_1)(g_{\omega_2})$ and inductively we find that
\begin{equation} \label{eq: expansion formula}
g_{\omega_1\ldots\omega_m}=g_{\omega_1}\cdot \rho(\omega_1)(g_{\omega_2})\cdot \rho(\omega_1\omega_2)(g_{\omega_3})\cdot \ldots\cdot \rho(\omega_1\ldots\omega_{m-1})(g_{\omega_m})
\end{equation}
for $\omega_1,\ldots,\omega_m\in F_X$. Similarly, for $\omega\in F_X$ we have
$$(1,1)=\tau(1)=\tau(\omega^{-1}\omega)=\tau(\omega^{-1})\tau(\omega)=(g_{\omega^{-1}},\rho(\omega^{-1}))(g_{\omega},\rho(\omega))=(g_{\omega^{-1}}\rho(\omega^{-1})(g_\omega),1)$$ and therefore
\begin{equation} \label{eq: g for inverse}
g_{\omega^{-1}}=\rho(\omega^{-1})(g_\omega)^{-1}.
\end{equation}

Let $b\in B$. We have to show that $\psi(b)=\varphi(b)$. Write $b=tx\mu(tx)^{-1}$ for some $x\in X$ and $t\in\overline{T}\subseteq F_X$.
Write $t=x_1\ldots x_{\ell-1}\in F_X$ and $\mu(tx)^{-1}=x_{\ell+1}\ldots x_m\in F_X$ in reduced form and set $x_\ell=x$. Then $b=x_1\ldots x_m$. Furthermore, set $u_i=\mu(x_1\ldots x_i)$ for $i=1,\ldots, m$ and $u_0=1$. Since $\overline{T}$ is prefixed closed, we have $u_i=x_1\ldots x_i$ for $i<\ell$. On the other hand, for $i>\ell$, using  $\mu(tx)^{-1}=x_{\ell+1}\ldots x_m$, we find
$$ \pi(u_i)=\pi(txx_{\ell+1}\ldots x_i)=\pi(x_m^{-1}\ldots x_{\ell+1}^{-1}x_{\ell+1}\ldots x_i)=\pi(x_m^{-1}\ldots x_{i+1}^{-1}).$$
Therefore, using again that $\overline{T}$ is prefixed closed, we obtain $u_i=x_m^{-1}\ldots x_{i+1}^{-1}$ for $i>\ell$.
From these explicit formulas for the $u_i$'s we derive that 
\begin{equation} \label{eq: ui}
u_{i-1}x_iu_i^{-1}=1 \quad \text{ for }\quad  i\in\{1,\ldots,m\}\smallsetminus\{\ell\}.
\end{equation}  
We next show that for $y\in X\cup X^{-1}$ and $\omega\in F_X$
\begin{equation} \label{eq: is 1}
g_y(\pi(\omega))=1 \quad \text{ if } \quad \mu(\omega)y\mu(\omega y)^{-1}=1.
\end{equation}
Recall that for $y\in X$ we have $g_y(\pi(\omega))=\varphi(T(\pi(\omega))y\mu(T(\pi(\omega))y)^{-1})=\varphi(\mu(\omega)y\mu(\mu(\omega) y)^{-1})$.
But $\pi(\mu(\omega))=\pi(\omega)$ and so $\pi(\mu(\omega)y)=\pi(\omega y)$. Therefore $\mu(\mu(\omega)y)=\mu(\omega y)$. This proves (\ref{eq: ui}) for $y\in X$. If $y\in X^{-1}$ then $y^{-1}\in X$ and using (\ref{eq: g for inverse}) we obtain
\begin{align*}
g_y(\pi(\omega)) & =g_{y^{-1}}(\pi(\omega)y)^{-1}=g_{y^{-1}}(\pi(\omega y))^{-1}=\varphi(\mu(\omega y)y^{-1}\mu(\omega y y^{-1})^{-1})^{-1}=\\
& =\varphi(\mu(\omega y)y^{-1}\mu(\omega)^{-1})^{-1}=\varphi(1)^{-1}=1.
\end{align*}
This proves (\ref{eq: is 1}).

Since, $\mu(\omega)x_i\mu(\omega x_i)^{-1}=u_{i-1}x_iu_i^{-1}=1$ by (\ref{eq: ui}) for $i\in\{1,\ldots,m\}\smallsetminus\{\ell\}$ and $\omega=x_1\ldots x_{i-1}$, we obtain $g_{x_i}(\pi(x_1\ldots x_{i-1}))=1$ for $i\neq \ell$ from (\ref{eq: is 1}). Using this and (\ref{eq: expansion formula}) we find
\begin{align*}
\psi(b)&=\pi_G(\tau(b))=g_b(\pi(1))=g_{x_1\ldots x_m}(\pi(1))=\\
&=g_{x_1}(\pi(1))\cdot \rho(x_1)(g_{x_2})(\pi(1))\cdot\ldots\cdot \rho(x_1\ldots x_{m-1})(g_{x_m})(\pi(1))=\\
&=g_{x_1}(\pi(1))\cdot g_{x_2}(\pi(x_1))\cdot \ldots\cdot g_{x_m}(\pi(x_1\ldots x_{m-1}))=\\
&=g_{x_\ell}(\pi(x_1\ldots x_{\ell-1}))=\varphi(\mu(x_1\ldots x_{\ell-1})x_\ell\mu(x_1\ldots x_\ell)^{-1})=\\
&=\varphi(x_1\ldots x_m)=\varphi(b).
\end{align*}
So $\psi$ extends $\varphi$ as claimed and we have shown that $H$ is the free proalgebraic group on $B$. It remains to determine the cardinality of $B$. However, since $\overline{T}$ is a Schreier transversal of $F_X\cap H(k)$ in $F_X$ and $[F_X:(F_X\cap H(k))]=[\Gamma:H]$ by (\ref{eq:bijection}), this follows from Theorem \ref{theo: NielsenSchreier}.
\end{proof}

\section{Matzat's conjecture}

In this section we prove Matzat's conjecture for fields of constants of infinite transcendence degree. 
Throughout this section $(K,\de)$ denotes an (ordinary) differential field of characteristic zero. We assume that the field of constants $k=K^\de=\{a\in K|\ \de(a)=0\}$ is algebraically closed.

\subsection{Differential Galois theory}

We begin by recalling the basic definitions and results from differential Galois theory. The reader interested in more background is invited to consult \cite{BachmayrHarbaterHartmannWibmer:FreeDifferentialGaloisGroups} or any of the introductory textbooks \cite{Magid:LecturesOnDifferentialGaloisTheory}, \cite{SingerPut:differential}, \cite{CrespoHajto:AlgebraicGroupsAndDifferentialGaloisTheory} \cite{Sauloy:DifferentialGaloisTheoryThroughRiemannHilbertCorrespondence}.
We consider a family 
\begin{equation} \label{eq: equations}
(\de(y)=A_iy)_{i\in I}, \quad A_i\in K^{n_i\times n_i}
\end{equation}
of linear differential systems over $K$ indexed by a set $I$.

\begin{defi}
	A differential field extension $L/K$ is a \emph{Picard-Vessiot extension} for (\ref{eq: equations}) if $L^\de=k$ and for every $i\in I$ there exists $Y_i\in \Gl_n(L)$ with $\de(Y_i)=A_iY_i$ and $L$ is generated as a field extension of $K$ by the entries of all $Y_i$'s. The $K$-subalgebra $R$ of $L$ generated by the inverse of the determinant and all the entries of all $Y_i's$ is called a \emph{Picard-Vessiot ring} for (\ref{eq: equations}).
\end{defi}
There exists a Picard-Vessiot extension for (\ref{eq: equations}) and it is unique up to a $K$-$\de$-isomorphism (since $k$ is algebraically closed). 
We will call the Picard-Vessiot extension $\widetilde{K}/K$ for the family of all linear differential systems over $K$ the \emph{linear closure} of $K$. In the literature, the term Picard-Vessiot extension is often only used for a single equation. To make this distinction, we say that a Picard-Vessiot extension is of finite type if it is the Picard-Vessiot extension for a single linear differential system. Note that every Picard-Vessiot extension is a directed union Picard-Vessiot extensions of finite type.

Recall that an element $a$ in a $\de$-field extension $L$ of $K$ is called \emph{$\de$-finite} over $K$ if $a$ satisfies a non-trivial homogeneous linear differential equation with coefficients in $K$. Note that $a$ is $\de$-finite over $K$ if and only if there exists a finite dimensional $K$-subspace $V$ of $L$ such that $a\in V$ and $\de(V)\subseteq V$.
The following characterization of the linear closure will be useful later on.

\begin{lemma} \label{lemma:characterize linear closure}
	Let $L/K$ be an extension of $\de$-fields such that $L^\de=k$. Then $L$ is a linear closure of $K$ if and only if
	\begin{enumerate}
		\item for every $n\geq 1$ and $A\in K^{n\times n}$ there exists $Y\in\Gl_n(L)$ such that $\de(Y)=AY$ and
		\item $L$ is generated as a field extension of $K$ by elements that are $\de$-finite over $K$.
	\end{enumerate}
\end{lemma}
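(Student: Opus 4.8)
The plan is to prove the two implications separately; the forward direction is essentially bookkeeping, while the reverse direction carries the content. For ($\Rightarrow$), I would start from the definition: $L$ is a Picard--Vessiot extension for the family of \emph{all} linear differential systems over $K$. Condition (i) is then immediate, since such a system in particular belongs to the family. For condition (ii), recall that $L$ is generated over $K$ by the entries of fundamental solution matrices, one for each system $\de(y)=Ay$. The key observation is that if $y$ is a column of such a fundamental matrix $Y\in\Gl_n(L)$, so that $\de(y)=Ay$, then the $K$-span $V$ of the entries of $y$ is a finite-dimensional $K$-subspace of $L$ with $\de(V)\subseteq V$; hence every entry of $Y$ is $\de$-finite over $K$. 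As these entries generate $L$, condition (ii) follows.

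For ($\Leftarrow$), assume (i) and (ii). Using (i), for every system $\de(y)=Ay$ I would fix a fundamental matrix $Y_A\in\Gl_n(L)$ and let $M\subseteq L$ be the $K$-subfield generated by the entries of all the $Y_A$. Since $k\subseteq M\subseteq L$ and $L^\de=k$, we get $M^\de=k$, and by construction $M$ is a Picard--Vessiot extension for the family of all linear differential systems over $K$, i.e.\ a linear closure of $K$. It therefore suffices to prove $M=L$, and since $M\subseteq L$ this amounts to showing $L\subseteq M$. By (ii) together with $K\subseteq M$, it is enough to show that every element $a\in L$ that is $\de$-finite over $K$ already lies in $M$.

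To verify this, I would choose a finite-dimensional $K$-subspace $V\subseteq L$ with $a\in V$ and $\de(V)\subseteq V$, pick a $K$-basis $v_1,\ldots,v_n$ of $V$, and let $B\in K^{n\times n}$ be the matrix of $\de$ on $V$ in this basis, so that the column vector $v$ with entries $v_1,\ldots,v_n$ satisfies $\de(v)=Bv$. The crux is the standard fact that, because $L^\de=k$, every solution of $\de(y)=By$ in $L^n$ is a $k$-linear combination of the columns of the fundamental matrix $Y_B\in\Gl_n(M)$: from $\de(Y_B^{-1})=-Y_B^{-1}B$ one computes $\de(Y_B^{-1}v)=0$, whence $Y_B^{-1}v\in k^n$ and thus $v\in Y_B\cdot k^n\subseteq M^n$. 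Consequently $v_1,\ldots,v_n\in M$ and $a\in V\subseteq M$, giving $L\subseteq M$ and hence $L=M$. I expect the main obstacle to lie precisely here, in reducing an \emph{arbitrary} $\de$-finite generator of $L$ to an honest solution of a matrix system over $K$ and then showing that a single fundamental matrix captures all such solutions; this last computation is exactly where the hypothesis $L^\de=k$ is indispensable.
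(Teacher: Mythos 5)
Your proposal is correct and follows essentially the same route as the paper: both converse arguments reduce to showing that every element of $L$ that is $\de$-finite over $K$ lies in the subfield generated over $K$ by entries of fundamental solution matrices, using (via $L^\de=k$) that any solution vector is a $k$-linear combination of the columns of a fundamental matrix. The only cosmetic difference is that the paper converts a scalar equation into its companion-matrix system and leaves the $\de\bigl(Y^{-1}v\bigr)=0$ computation implicit, whereas you work directly with the invariant-subspace characterization of $\de$-finiteness and spell that computation out.
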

\begin{proof}
	If $Y\in\Gl_n(L)$ such that $\de(Y)=AY$ for some $A\in K^{n\times n}$, then the $K$-subspace $V$ of $L$ generated by the entries of $Y$ satisfies $\de(V)\subseteq V$. Thus the linear closure of $K$ satisfies the above two conditions. 
	
	Conversely, assume that $L/K$ satisfies (i) and (ii). Let $L'$ be the subfield of $L$ generated over $K$ by all entries of all matrices $Y\in\Gl_n(L)$ such that $\de(Y)=AY$ for some $A\in\Gl_n(L)$ $(n\geq 1)$. We have to show that $L=L'$. Since $L$ is generated by elements that are $\de$-finite over $K$, it suffices to show that  every $y\in L$ that is $\de$-finite over $K$ is contained in $L'$. But if $y$ satisfies a linear differential equation
	\begin{equation} \label{eq: linear eq}
		\de^n(y)+a_{n-1}\de^{n-1}(y)+\ldots+a_0y=0
	\end{equation}
	over $K$, then $y$ lies in the subfield of $L$ generated by the entries of $Y$, where $Y\in\Gl_n(L)$ satisfies $\de(Y)=AY$, with $A$ the companion matrix of (\ref{eq: linear eq}). Therefore, $y\in L'$ and $L=L'$. 
\end{proof}

\begin{defi}
	Let $L/K$ be a Picard-Vessiot extension with Picard-Vessiot ring $R$. The \emph{differential Galois group} $G(L/K)$ of $L/K$ is the functor from the category of $k$-algebras to the category of groups, that associates to every $k$-algebra $T$ the group of all $K\otimes_k T$-$\de$-automorphisms of $R\otimes_k T$, where $T$ is considered as a constant $\de$-ring.
\end{defi}

The differential Galois group of a Picard-Vessiot extension is representable, i.e., it is a proalgebraic group.
The differential Galois group of a Picard-Vessiot extension of finite type is an algebraic group. The \emph{absolute differential Galois group} of $K$ is the differential Galois group of the linear closure $\widetilde{K}/K$.

The Galois correspondence for a Picard-Vessiot extension $L/K$ is the bijection $M\mapsto G(L/M)$ between the intermediate differential fields of $L/K$ and the closed subgroups of $G(L/K)$.  Moreover, an intermediate differential field $M$ is itself a Picard-Vessiot extension of $K$, if and only if $G(L/M)$ is normal in $G$. In this case the induced morphism $G(L/K)\to G(M/K)$ is a quotient map with kernel $G(L/M)$. We will need to know the following.

\begin{lemma} \label{lemma: finite PV implies quotient is finite scheme}
	Let $L/K$ be a Picard-Vessiot extension with differential Galois group $G$. Let $M$ be an intermediate differential field of $L/K$ such that $M/K$ is finite and let $H=G(L/M)\leq G$. Then $G/H$ is a finite $k$-scheme. 
\end{lemma}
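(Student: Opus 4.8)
The plan is to reduce the statement to the case of an algebraic differential Galois group, where $G/H$ is an honest quotient scheme whose dimension can be read off from the fundamental dimension formula of Picard-Vessiot theory.

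First I would choose a finite-type Picard-Vessiot subextension $L_0/K$ of $L/K$ with $M\subseteq L_0$. Such an $L_0$ exists because $L/K$ is a directed union of finite-type Picard-Vessiot extensions and $M/K$ is finitely generated, so the finitely many generators of $M$ lie in a common member $L_0$ of this directed union. Then $G(L_0/K)$ is an algebraic group, and restriction gives a quotient map $G\to G(L_0/K)$ whose kernel $N=G(L/L_0)$ is a coalgebraic subgroup of $G$ (i.e. $G/N\cong G(L_0/K)$ is algebraic). Since $M\subseteq L_0$ we have $N\leq G(L/M)=H$, and in fact, invoking the Galois correspondence, $H$ is exactly the preimage of $H_0:=G(L_0/M)\leq G(L_0/K)$ under restriction, so that $H/N\cong H_0$.

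Next, exactly as in the proof of Lemma \ref{lemma: quotients}(ii) (taking the present $N$ for the coalgebraic subgroup contained in $H$), the quotient map $G\to G/N=G(L_0/K)$ induces an isomorphism $H\backslash G\cong H_0\backslash G(L_0/K)$. Because $G(L_0/K)$ is algebraic, $H_0\backslash G(L_0/K)$ is a scheme (\cite[Chapter III, \S 3, Theorem 5.4]{DemazureGabriel:GroupesAlgebriques}). It remains to see that this scheme is finite, and this is the only place the hypothesis $[M:K]<\infty$ enters. Here $L_0/M$ is again a finite-type Picard-Vessiot extension (with the same fundamental matrix, and with $M^\de=k$ since $M^\de\subseteq L_0^\de=k$), whose differential Galois group is $H_0$. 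By the fundamental dimension formula (\cite{SingerPut:differential}), $\dim G(L_0/K)=\trdeg(L_0/K)$ and $\dim H_0=\trdeg(L_0/M)$. Since $M/K$ is finite, hence algebraic, $\trdeg(L_0/K)=\trdeg(L_0/M)$, so $\dim G(L_0/K)=\dim H_0$ and the homogeneous space $H_0\backslash G(L_0/K)$ has dimension zero. A scheme of finite type and dimension zero over a field is finite, so $H_0\backslash G(L_0/K)$, and hence (via the inversion isomorphism $G/H\cong H\backslash G$) also $G/H$, is a finite $k$-scheme.

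The routine verifications are that $L_0$ can be taken to be finite-type Picard-Vessiot and that $H$ is the full preimage of $H_0$; both follow from the Galois correspondence. I do not expect a serious obstacle: the one genuinely nontrivial ingredient is the dimension count, which converts the finiteness of $M/K$ into the zero-dimensionality of the quotient, and the transfer of finiteness from the algebraic quotient back to $G/H$ is handled verbatim by the coset-space reduction already used in Lemma \ref{lemma: quotients}.
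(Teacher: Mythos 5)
Your proof is correct, and its first half coincides with the paper's: the paper likewise chooses a finite\-/type Picard--Vessiot extension $M'$ (your $L_0$) with $M\subseteq M'\subseteq L$, sets $N=G(L/M')$, and uses the identification $G/H\simeq (G/N)/(H/N)$ to reduce to the case where $G$ is algebraic. In particular, the mild circularity in your appeal to the proof of Lemma \ref{lemma: quotients}(ii) (a lemma stated for \emph{finite-index} subgroups, which $H$ is not yet known to be) is harmless: the third-isomorphism identification of coset sheaves used there does not depend on the finite-index hypothesis, and the paper leans on exactly the same fact at the same point. Where you genuinely diverge is the finiteness argument in the algebraic case. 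The paper argues via the identity component: since $M/K$ is finite, $M$ lies in the relative algebraic closure $M^o$ of $K$ in $L_0$, and $G(L_0/M^o)=G(L_0/K)^o$ by \cite[Prop.~1.34]{SingerPut:differential}; hence $H_0\supseteq G(L_0/K)^o$, so $G/H$ is a quotient of the finite scheme $G(L_0/K)/G(L_0/K)^o$. You instead invoke the dimension formula $\dim G(L_0/K)=\trdeg(L_0/K)$ for finite-type Picard--Vessiot extensions together with additivity of dimension along the faithfully flat map $G(L_0/K)\to H_0\backslash G(L_0/K)$, concluding that the quotient is a zero-dimensional finite-type scheme and hence finite. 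Both endgames rest on standard results from \cite{SingerPut:differential}, and in characteristic zero they are equivalent in substance: for smooth groups, $H_0\supseteq G(L_0/K)^o$ if and only if $\dim H_0=\dim G(L_0/K)$. The paper's version is slightly more economical, needing neither the dimension formula nor dimension theory for homogeneous spaces; yours has the merit of isolating transparently the only role of the hypothesis $[M:K]<\infty$, namely to force $\trdeg(L_0/M)=\trdeg(L_0/K)$.
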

\begin{proof}
	Let $M'/K$ be a Picard-Vessiot extension of finite type such that $M\subseteq M'\subseteq L$. Such an $M'$ exists because $M/K$ is finite and $L$ is the directed union of Picard-Vessiot extensions of finite type. Set $N=G(L/M)$. Since $G/H\simeq (G/N)/(H/N)$ and $G/N$ is the differential Galois group of $M'/K$, while $H/N$ is the differential Galois group of $M'/M$, we can reduce to the case that $L/K$ is of finite type. Then $G$ is an algebraic group and therefore $G/H$ is a scheme (\cite[Chapter III, \S 3, Theorem 5.4]{DemazureGabriel:GroupesAlgebriques}).
	Let $M^o$ denote the relative algebraic closure of $K$ in $L$. Then $G(L/M^o)=G^o$, the identity component of $G$ (\cite[Prop. 1.34]{SingerPut:differential}). As $M'\subseteq M^o$, we have $G^o=G(L/M^o)\leq G(L/M')=H$. Therefore $G/H$ is finite. 
\end{proof}

\subsection{Linear closures and the proof of Matzat's conjecture}

To prove Matzat's conjecture for function fields, we need more information about linear closures.

\begin{lemma} \label{lem: isomorphic absolute differential Galois groups}
	Let $(K,\de)$ be a differential field and let $a$ be a non-zero element of $K$. Then the differential fields $(K,\de)$ and $(K,a\de)$ have isomorphic absolute differential Galois groups.  
\end{lemma}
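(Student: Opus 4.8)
The plan is to show that the two derivations give rise to literally the same linear closure, carrying the same Picard--Vessiot ring, so that the two absolute differential Galois groups coincide as functors (and are in particular isomorphic).

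First I would record the elementary observations that make this work. Since $a\neq 0$, the map $a\de$ is again a $k$-derivation of $K$, and $a\de(x)=0$ if and only if $\de(x)=0$, so $(K,a\de)$ is a differential field with the same algebraically closed constant field $k=K^\de=K^{a\de}$. The crucial point is a dictionary between the two linear theories: for $Y\in\Gl_n(L)$ one has $a\de(Y)=BY$ if and only if $\de(Y)=a^{-1}BY$, and as $B$ ranges over all of $K^{n\times n}$ so does $a^{-1}B$. Hence the two derivations have exactly the same fundamental solution matrices inside any $\de$-overfield $L$ with $L^\de=k$. Likewise, for a finite-dimensional $K$-subspace $V\subseteq L$ one has $\de(V)\subseteq V$ if and only if $a\de(V)\subseteq V$ (multiply by $a$ respectively $a^{-1}$, both of which lie in $K$ and preserve $V$), so an element is $\de$-finite over $K$ precisely when it is $a\de$-finite over $K$.

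Next I would invoke Lemma \ref{lemma:characterize linear closure}. Fix a linear closure $\widetilde{K}$ of $(K,\de)$. By the two observations above, the characterizing conditions (i) and (ii) of that lemma hold for $\de$ on $\widetilde{K}$ if and only if they hold for $a\de$; since $\widetilde{K}^{a\de}=k$ as well, $\widetilde{K}$ is simultaneously a linear closure of $(K,a\de)$. Moreover, because the two derivations share the same solution matrices, the Picard--Vessiot ring $R\subseteq\widetilde{K}$ generated by the entries of all such $Y$ and the inverses of their determinants is the very same subring for both derivations.

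Finally I would identify the Galois group functors on this common ring $R$. For a $k$-algebra $T$ (a constant $\de$-ring), the derivation $a\de$ extends to $R\otimes_k T$ as multiplication by $a\otimes 1\in K\otimes_k T$ composed with the extension of $\de$. Any $K\otimes_k T$-linear automorphism $\sigma$ of $R\otimes_k T$ fixes $a\otimes 1$, so $\sigma$ commutes with the extension of $\de$ if and only if it commutes with the extension of $a\de$. Thus the group of $K\otimes_k T$-$\de$-automorphisms and the group of $K\otimes_k T$-$a\de$-automorphisms of $R\otimes_k T$ literally coincide, functorially in $T$. Hence the absolute differential Galois groups of $(K,\de)$ and $(K,a\de)$ are equal as proalgebraic groups, in particular isomorphic. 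The only real work is the second step---upgrading the formal equivalence of the two linear-closure conditions into an honest identification of the Picard--Vessiot ring $R$---after which the functorial statement in the last step is immediate.
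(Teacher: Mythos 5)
Your proof is correct and takes essentially the same route as the paper: fix the linear closure $\widetilde{K}$ of $(K,\de)$, use the dictionary $a\de(Y)=BY \Leftrightarrow \de(Y)=a^{-1}BY$ (together with $\widetilde{K}^{a\de}=\widetilde{K}^{\de}=k$) to conclude that $\widetilde{K}$, with the very same Picard--Vessiot ring $R$, is also a linear closure of $(K,a\de)$, and then observe that a $K\otimes_k T$-automorphism of $R\otimes_k T$ commutes with $\de$ if and only if it commutes with $a\de$. The only cosmetic difference is that you verify the linear-closure property for $a\de$ via the characterization in Lemma \ref{lemma:characterize linear closure} (equality of the notions of $\de$-finite and $a\de$-finite), whereas the paper checks the generation condition directly from the definition; both are immediate from your dictionary.
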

\begin{proof}
	Let $(\widetilde{K},\delta)$ denote the linear closure of $(K,\de)$. We will first show that $(\widetilde{K},a\de)$ is a linear closure of $(K,a\de)$. To begin, we observe that $\widetilde{K}^{a\de}=\widetilde{K}^{\de}=K^\de=K^{a\de}(=k)$. 
	To see that every linear differential system over $(K, a\de)$ has a fundamental solution matrix in $(\widetilde{K},a\de)$, fix $A\in K^{n\times n}$ and let $Y\in\Gl_n(\widetilde{K})$ be such that $\de(Y)=a^{-1}AY$. Then $Y$ is a fundamental solution matrix for the linear differential system over $(K,a\de)$ defined by $A$. As $\widetilde{K}$ is generated as a field extension of $K$ by the entries of fundamental solution matrices over $(K,\de)$ it is clear that $\widetilde{K}$ is also generated as a field extension of $K$ by the entries of fundamental solution matrices over $(K,a\de)$. Therefore $(\widetilde{K}, a \de)$ is a linear closure of $(K,a\de)$. Moreover, if $(R, \de)$ is the Picard-Vessiot ring of $(\widetilde{K}/K, \de)$, then $(R,a\de)$ is the Picard-Vessiot ring of $(\widetilde{K}/K, a\de)$.
	
	For a $k$-algebra $T$, a $K\otimes_k T$-automorphism of $R\otimes_k T$ commutes with $\de$ if and only if it commutes with $a\de$. Thus the two absolute differential Galois groups are isomorphic.  
\end{proof}

%
%
%
%
%
%
%
%

\begin{lemma} \label{lemma: finite de extension}
	Let $L/K$ be an extension of $\de$-fields such that $L/K$ is a finite field extension. Let $\widetilde{L}/L$ be a linear closure of $L$. Then $\widetilde{L}$ is also a linear closure of $K$.
\end{lemma}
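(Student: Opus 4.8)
The plan is to verify that $\widetilde{L}$ satisfies the two conditions characterizing a linear closure of $K$ given in Lemma \ref{lemma:characterize linear closure}. Before doing so I must check the standing hypothesis of that lemma, namely that the constant field of $\widetilde{L}$ is $k$. Since $\widetilde{L}/L$ is a linear closure, $\widetilde{L}^\de=L^\de$, so it suffices to show $L^\de=k$. For this I would take a constant $c\in L^\de$ and apply $\de$ to its minimal polynomial $t^n+a_{n-1}t^{n-1}+\cdots+a_0$ over $K$; since $\de(c)=0$, this yields $\sum_{i=0}^{n-1}\de(a_i)c^i=0$, a relation of degree $<n$ satisfied by $c$, which forces $\de(a_i)=0$, i.e.\ $a_i\in k$. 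Hence $c$ is algebraic over the algebraically closed field $k$, so $c\in k$ and $L^\de=k$.

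Condition (i) of Lemma \ref{lemma:characterize linear closure} is immediate: any $A\in K^{n\times n}$ also lies in $L^{n\times n}$, and because $\widetilde{L}$ is a linear closure of $L$ the system $\de(Y)=AY$ already has a fundamental solution matrix $Y\in\Gl_n(\widetilde{L})$.

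The real content is condition (ii): that $\widetilde{L}$ is generated over $K$ by elements that are $\de$-finite over $K$. The key observation, which I expect to be the crux, is a transitivity property of $\de$-finiteness: if $a\in\widetilde{L}$ is $\de$-finite over $L$ and $L/K$ is finite, then $a$ is $\de$-finite over $K$. To see this, choose a finite-dimensional $L$-subspace $W\subseteq\widetilde{L}$ with $a\in W$ and $\de(W)\subseteq W$ (such a $W$ exists by the characterization of $\de$-finiteness recalled before Lemma \ref{lemma:characterize linear closure}). Then $W$ is also a $K$-subspace, it is finite-dimensional over $K$ because $\dim_K W=[L:K]\cdot\dim_L W<\infty$, and it still satisfies $\de(W)\subseteq W$ and $a\in W$; hence $a$ is $\de$-finite over $K$. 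Separately, every element of $L$ is $\de$-finite over $K$, since $L$ itself is a finite-dimensional $K$-subspace of $\widetilde L$ with $\de(L)\subseteq L$.

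With these two facts the conclusion follows. Since $\widetilde{L}$ is a linear closure of $L$, it is generated as a field extension of $L$ by elements that are $\de$-finite over $L$ (for instance the entries of the fundamental solution matrices), and by the transitivity above these are $\de$-finite over $K$. Adjoining the generators of $L$ over $K$, which are themselves $\de$-finite over $K$ as just noted, shows that $\widetilde{L}$ is generated as a field extension of $K$ by elements $\de$-finite over $K$. Thus both conditions of Lemma \ref{lemma:characterize linear closure} hold, and therefore $\widetilde{L}$ is a linear closure of $K$.
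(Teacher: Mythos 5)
Your proof is correct and takes essentially the same route as the paper's: both reduce to the two conditions of Lemma \ref{lemma:characterize linear closure}, with the crux in both cases being that a finite-dimensional $\de$-stable $L$-subspace of $\widetilde{L}$ is automatically a finite-dimensional $\de$-stable $K$-subspace (since $[L:K]<\infty$), so $\de$-finiteness over $L$ implies $\de$-finiteness over $K$, and elements of $L$ itself are $\de$-finite over $K$. Your preliminary check that $L^\de=k$ is a detail the paper leaves implicit, not a difference in method.
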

\begin{proof}
	Clearly, for every $A\in K^{n\times n}\subseteq L^{n\times n}$ there exists $Y\in\Gl_n(\widetilde{L})$ with $\de(Y)=AY$. By Lemma~\ref{lemma:characterize linear closure} it suffices to show that $\widetilde{L}/K$ is generated as a field extension of $K$ by elements that are $\de$-finite over $K$.  
	
	Let $f\in \widetilde{L}$ be $\de$-finite over $L$. We will show that $f$ is also $\de$-finite over $K$. Let $V$ be a finite $L$-subspace of $\widetilde{L}$ such that $\de(V)\subseteq V$ and $f\in V$. Because $L/K$ is finite, $V$ is also finite as a $K$-vector space. Thus, $f$ is $\de$-finite over $K$.
	
	The field extension $\widetilde{L}/K$ is generated by the elements of $L$ and by elements of $\widetilde{L}$ that are $\de$-finite over $L$. As all these elements are $\de$-finite over $K$, we see that $\widetilde{L}/K$ is generated by elements that are $\de$-finite over $K$.  	
\end{proof}

We note that if $L/K$ and $M/L$ are Picard-Vessiot extensions, it is in general not possible to embed $M/K$ into a Picard-Vessiot extension of $K$. (See page 13 in \cite{Magid:ThePicardVessiotAntiderivativeClosure} for such an example.) However, it follows from Lemma \ref{lemma: finite de extension}, that it is possible to embed $M/K$ into a Picard-Vessiot extension of $K$ if $L/K$ is finite.  

\begin{cor} \label{cor: absolute Galois group is finite index subgroup}
	Let $L/K$ be an extension of $\de$-fields such that $L/K$ is a finite field extension. Then the absolute differential Galois group of $L$ is isomorphic to a closed finite index subgroup of the absolute differential Galois group of $K$. 
\end{cor}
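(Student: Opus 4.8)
The plan is to compute both absolute differential Galois groups inside one common linear closure and then to read off the conclusion from the Galois correspondence together with Lemma~\ref{lemma: finite PV implies quotient is finite scheme}. First I would fix a linear closure $\widetilde{L}/L$ of $L$. The essential input is Lemma~\ref{lemma: finite de extension}: because $L/K$ is a finite field extension, $\widetilde{L}$ is \emph{simultaneously} a linear closure of $K$. Hence $\widetilde{L}/K$ is a Picard-Vessiot extension (a linear closure of $K$), and by definition the absolute differential Galois group of $K$ is $G(\widetilde{L}/K)$, while that of $L$ is $G(\widetilde{L}/L)$. The whole point is that these two groups are now realized over the \emph{same} field $\widetilde{L}$, so one sits inside the other.

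Before invoking the Galois theory I would check that the running hypotheses of this section apply to $L$, i.e., that $L^\de = k$, so that the absolute differential Galois group of $L$ is defined. Any $c\in L^\de$ is algebraic over $K$ since $L/K$ is finite; differentiating its minimal polynomial over $K$ and using $\de(c)=0$ forces, by minimality, all coefficients to be constants of $K$, hence to lie in $k$. Thus $c$ is algebraic over $k$, and as $k$ is algebraically closed, $c\in k$; therefore $L^\de=k$ (and $\widetilde{L}^\de=L^\de=k$ confirms $\widetilde{L}/K$ is genuinely a Picard-Vessiot extension). Now $L$ is an intermediate $\de$-field of the Picard-Vessiot extension $\widetilde{L}/K$, so the Galois correspondence identifies $H:=G(\widetilde{L}/L)$ with a closed subgroup of $G:=G(\widetilde{L}/K)$.

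Finally, since $L/K$ is finite, I would apply Lemma~\ref{lemma: finite PV implies quotient is finite scheme} with $\widetilde{L}/K$ in the role of the Picard-Vessiot extension and $L$ in the role of $M$, concluding that $G/H$ is a finite $k$-scheme, i.e., that $H$ has finite index in $G$. This exhibits the absolute differential Galois group $G(\widetilde{L}/L)$ of $L$ as a closed finite index subgroup of the absolute differential Galois group $G(\widetilde{L}/K)$ of $K$, which is the claim. Since the genuine content is already packaged in Lemma~\ref{lemma: finite de extension} and Lemma~\ref{lemma: finite PV implies quotient is finite scheme}, I expect no serious obstacle here; the only real verification is the constants computation $L^\de=k$ ensuring that the two Galois groups are compatibly defined, and that is just the minimal-polynomial argument above.
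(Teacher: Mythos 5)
Your proof is correct and follows essentially the same route as the paper: invoke Lemma~\ref{lemma: finite de extension} to see that a linear closure $\widetilde{L}$ of $L$ is also a linear closure of $K$, then apply Lemma~\ref{lemma: finite PV implies quotient is finite scheme} with $M=L$. The only difference is that you additionally verify $L^\de=k$ via the minimal-polynomial argument, a point the paper leaves implicit; this is a reasonable extra check, not a departure in method.
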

\begin{proof}
	Let $\widetilde{L}$ be the linear closure of $L$. We know from Lemma \ref{lemma: finite de extension} that $\widetilde{L}$ is the linear closure of $K$. Thus the claim follows from Lemma \ref{lemma: finite PV implies quotient is finite scheme}.
\end{proof}	
We are now prepared to prove our main results.
\begin{prop} \label{prop: Matzat implies Matzat}
	Let $k$ be an algebraically closed field of characteristic zero. If Matzat's conjecture is true for $k(x)$, the rational function field in one variable equipped with the standard derivation $\frac{d}{dx}$, then Matzat's conjecture is true for any one-variable function field over $k$, equipped with a non-trivial $k$-derivation.
\end{prop}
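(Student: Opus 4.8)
The plan is to reduce the general case to the already-assumed case of $(k(x),\frac{d}{dx})$ by exhibiting $K$ as a finite differential field extension of a rational function field carrying the standard derivation, and then to transport freeness across this finite extension using the proalgebraic Nielsen--Schreier theorem (Theorem~\ref{theo: proalgebraic NielsenSchreier}).

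First I would normalize the derivation. Since $\de$ is a non-trivial $k$-derivation and $k=K^\de$, there is an element $x\in K$ with $\de(x)\neq 0$; as $x\notin k$ and $\trdeg_k(K)=1$, the element $x$ is transcendental over $k$, so $K$ is a finite extension of $k(x)$, necessarily separable because the characteristic is zero. Consequently the standard derivation $\frac{d}{dx}$ on $k(x)$ extends uniquely to a $k$-derivation $D$ of $K$, and $D(x)=1\neq 0$ shows that $D$ is non-trivial. Because $K$ is a one-variable function field, $\operatorname{Der}_k(K,K)\cong\Hom_K(\Omega_{K/k},K)$ is one-dimensional over $K$; hence the two non-zero derivations $\de$ and $D$ are proportional, i.e. $\de=aD$ for some $a\in K^\times$. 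By Lemma~\ref{lem: isomorphic absolute differential Galois groups} the differential fields $(K,\de)=(K,aD)$ and $(K,D)$ have isomorphic absolute differential Galois groups, so it suffices to treat $(K,D)$.

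Next I would verify that $(K,D)$ is a legitimate finite differential field extension of $(k(x),\frac{d}{dx})$ with the correct field of constants. By construction $D$ restricts to $\frac{d}{dx}$ on $k(x)$, so $(K,D)/(k(x),\frac{d}{dx})$ is a finite $\de$-field extension. The constant field $K^D$ contains $k$; were $K^D$ of transcendence degree $1$ over $k$, then $K/K^D$ would be finite and separable and $D$ would vanish on all of $K$, contradicting $D\neq 0$. Thus $K^D$ is algebraic over the algebraically closed field $k$, whence $K^D=k$. With the constants of both fields equal to $k$, Corollary~\ref{cor: absolute Galois group is finite index subgroup} applies and shows that the absolute differential Galois group $G_K$ of $(K,D)$ is isomorphic to a closed finite index subgroup of the absolute differential Galois group $G_{k(x)}$ of $(k(x),\frac{d}{dx})$.

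Finally I would assemble the conclusion. By hypothesis Matzat's conjecture holds for $(k(x),\frac{d}{dx})$, so $G_{k(x)}$ is the free proalgebraic group on a set of cardinality $|k(x)|$. Since $k$ is algebraically closed of characteristic zero it is infinite, so $|k(x)|$ is infinite, and as $K/k(x)$ is finite we have $|K|=|k(x)|$. Theorem~\ref{theo: proalgebraic NielsenSchreier}, applied to the closed finite index subgroup $G_K$ of the free group $G_{k(x)}$ on an infinite generating set, then yields that $G_K$ is free on a set of the same infinite cardinality $|k(x)|=|K|$. Hence the absolute differential Galois group of $(K,\de)$ is the free proalgebraic group on a set of cardinality $|K|$, which is exactly Matzat's conjecture for $(K,\de)$. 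The only genuinely delicate points are the two bookkeeping steps---the proportionality $\de=aD$ via the one-dimensionality of $\operatorname{Der}_k(K,K)$, and the verification $K^D=k$---since the substantive work is already carried out by Corollary~\ref{cor: absolute Galois group is finite index subgroup} and Theorem~\ref{theo: proalgebraic NielsenSchreier}.
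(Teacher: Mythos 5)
Your proposal is correct and follows essentially the same route as the paper: reduce to the standard derivation via the proportionality $\de=aD$ (the paper cites Stichtenoth for this one-dimensionality fact) together with Lemma~\ref{lem: isomorphic absolute differential Galois groups}, then apply Corollary~\ref{cor: absolute Galois group is finite index subgroup} and Theorem~\ref{theo: proalgebraic NielsenSchreier}. Your explicit verification that $K^D=k$ (needed for the standing hypothesis that the constants are algebraically closed) is a point the paper leaves implicit, and is a welcome addition rather than a deviation.
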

\begin{proof}
	Let $K$ be a one-variable function field over $K$ equipped with a non-trivial $k$-derivation $\de$. Let $x\in K$ be transcendental over $k$. The standard derivation $\frac{d}{dx}$ on $k(x)$ extends uniquely to a derivation $\de_x\colon K\to K$. Then $\de=a\de_x$ for some non-zero $a\in K$ (see, e.g., \cite[Lemma~4.1.6]{Stichtenoth:AlgebraicFunctionFieldsAndCodes}).
	We have to show that the absolute differential Galois group of $(K,\de)$ is the free proalgebraic group on a set of cardinality $|K|$. By Lemma \ref{lem: isomorphic absolute differential Galois groups}, it suffices to show that the absolute differential Galois group $H$ of $(K,\de_x)$ is a free proalgebraic group on a set of cardinality $|K|$. Since $(K,\de_x)$ is a finite differential field extension of $(k(x), \frac{d}{dx})$, it follows from Corollary \ref{cor: absolute Galois group is finite index subgroup} that $H$ is a closed finite index subgroup of the absolute differential Galois group of $(k(x), \frac{d}{dx})$. By assumption, the latter is a free proalgebraic group on a set of cardinality $|k(x)|=|K|$. Thus, by Theorem \ref{theo: proalgebraic NielsenSchreier}, $H$ is also a free proalgebraic group on a set of cardinality $|K|$.	 
\end{proof}

From Proposition \ref{prop: Matzat implies Matzat} we immediately obtain Matzat's conjecture for function fields over fields of constants of infinite transcendence degree.

\begin{theo} \label{theo: main matzat}
	Let $k$ be an algebraically closed field of characteristic zero of infinite transcendence degree over $\mathbb{Q}$ and let $K$ be a one-variable function field over $k$, equipped with a non-trivial $k$\=/derivation. Then the absolute differential Galois group of $K$ is the free proalgebraic group on a set of cardinality $|K|$.
\end{theo}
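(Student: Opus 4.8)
The plan is to obtain this theorem as an immediate consequence of Proposition~\ref{prop: Matzat implies Matzat} together with the already-known rational-function-field case. By this point essentially all the genuine content has been distilled into the reduction carried out in Proposition~\ref{prop: Matzat implies Matzat}, so the argument here is a short deduction: one simply has to observe that the infinite-transcendence-degree hypothesis supplies the base case that activates that reduction.

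First I would verify that the hypothesis of Proposition~\ref{prop: Matzat implies Matzat} is met for our base field $k$. Since $k$ is algebraically closed of characteristic zero and of infinite transcendence degree over $\mathbb{Q}$, Matzat's conjecture holds for the rational function field $(k(x),\frac{d}{dx})$ equipped with the standard derivation; this is precisely the main result of \cite{BachmayrHarbaterHartmannWibmer:TheDifferentialGaloisGroupOfRationalFunctionField}, recalled in the introduction. Concretely, the absolute differential Galois group of $(k(x),\frac{d}{dx})$ is the free proalgebraic group on a set of cardinality $|k(x)|$.

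With the base case secured, I would invoke Proposition~\ref{prop: Matzat implies Matzat}. Its hypothesis---Matzat's conjecture for $(k(x),\frac{d}{dx})$---is now known to hold over $k$, and its conclusion asserts Matzat's conjecture for every one-variable function field over $k$ equipped with a non-trivial $k$-derivation. Applying this to the given pair $(K,\de)$ yields at once that the absolute differential Galois group of $K$ is the free proalgebraic group on a set of cardinality $|K|$, which is exactly the assertion of the theorem.

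I do not expect any real obstacle at this final stage: all the substantive work lives upstream, namely in the reduction of Proposition~\ref{prop: Matzat implies Matzat} (which passes from an arbitrary non-trivial derivation to the standard one via Lemma~\ref{lem: isomorphic absolute differential Galois groups}, realizes the relevant group as a closed finite index subgroup through Corollary~\ref{cor: absolute Galois group is finite index subgroup}, and then applies the proalgebraic Nielsen--Schreier Theorem~\ref{theo: proalgebraic NielsenSchreier}), and in the cited base-case result over fields of infinite transcendence degree. The only point genuinely to check here is that the infinite-transcendence-degree hypothesis is precisely what makes that base case available, which it is.
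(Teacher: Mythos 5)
Your proposal is correct and matches the paper's own proof exactly: the theorem is deduced by applying Proposition~\ref{prop: Matzat implies Matzat}, whose hypothesis is supplied by the known case of Matzat's conjecture for $(k(x),\frac{d}{dx})$ over fields of infinite transcendence degree, i.e.\ \cite[Theorem 5.6]{BachmayrHarbaterHartmannWibmer:TheDifferentialGaloisGroupOfRationalFunctionField}. Nothing further is needed.
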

\begin{proof}
	This follows from Proposition \ref{prop: Matzat implies Matzat} because Matzat's conjecture holds for $(k(x),\frac{d}{dx})$, provided that $k$ has infinite transcendence degree, by \cite[Theorem 5.6]{BachmayrHarbaterHartmannWibmer:TheDifferentialGaloisGroupOfRationalFunctionField}.
\end{proof}

\bibliographystyle{alpha}
 \bibliography{bibdata}
\end{document}